\newtheorem*{rep@theorem}{\rep@title}
\newcommand{\newreptheorem}[2]{%
\newenvironment{rep#1}[1]{%
 \def\rep@title{#2 \ref{##1}}%
 \begin{rep@theorem}}%
 {\end{rep@theorem}}}
\numberwithin{equation}{section}
\theoremstyle{plain}
\newtheorem{theorem}{Theorem}[section]
\newtheorem{lemma}[theorem]{Lemma}
\newtheorem{corollary}[theorem]{Corollary}
\newtheorem{conjecture}{Conjecture}[section]
\theoremstyle{definition}
\newtheorem{definition}{Definition}[section]
\newtheorem{remark}[theorem]{Remark}
\newtheorem{notation}{Notation}[section]
\newtheorem{example}[theorem]{Example}
\begin{document}
\title{On the stability of kernel bundles over chain-like curves}
\address{Department of Mathematics, Amrita School of Engineering, Bangalore, Amrita Vishwa Vidyapeetham, India}
\email{chuchabn@gmail.com}
\author{Suhas B N, Susobhan Mazumdar and Amit Kumar Singh}
\address{Department of Mathematics, Indian Institute of Technology-Madras, Chennai, India }
\email{susobhan.mazumdar@gmail.com}
\address{The Institute of Mathematical Sciences, C.I.T. Campus, Taramani, Chennai- 600113, India}
\email{amitsingh@imsc.res.in / amitks.math@gmail.com}
\keywords{Semi-stability, Kernel bundles, Chain-like curves}
\subjclass[2010]{14H60, 14D20} 
\maketitle
\abstract 
Let $C$ be a chain-like curve having $n$ smooth components and $n-1$ nodes, where $n \geq 2$. Let $E$ be a vector bundle on $C$ and $V \subseteq H^0(E)$ be a linear subspace generating $E$. We investigate the (semi)stability of the \textit{kernel bundle} $M_{E,V}$ associated to $(E,V)$. 
\endabstract

\section{Introduction}
Let $C$ be a reduced projective curve over the field $\mathbb{C}$ of complex numbers. Let $E$ be a vector bundle of rank $r$ on $C$. Suppose $V$ is a linear subspace of $H^0(E)$ of dimension $k > r$ such that $V$ generates $E$. Such a pair $(E, V)$ is called a generated pair on $C$. The kernel bundle $M_{E, V}$ on $C$ of the generated pair $(E, V)$ is defined by the following exact sequence :
 \begin{equation}\label{defining sequence for the kernel bundle}
0 \rightarrow M_{E,V} \rightarrow V \otimes \mathcal{O}_C \rightarrow E \rightarrow 0.
\end{equation}
\par 
Also, the generated pair $(E, V)$ gives a morphism 
\[
\varphi_{E, V} : C \rightarrow Gr(r, V) := G,
\]
with  $ E \cong \varphi_{E,V}^*(Q)$, where $Q$ is the quotient bundle on $G$ defined by the exact sequence
\begin{equation}
0 \rightarrow S \rightarrow V \otimes \mathcal{O}_G \rightarrow Q \rightarrow 0 ,
\end{equation}
and $S$ is the universal bundle on $G$. Therefore, $M_{E, V}$ can be observed as the pull back of the universal bundle $S$ on $G$ via the morphism $\varphi_{E, V}$. In particular, for the case of a line bundle $L$, we have $\varphi^*_{L,V} (\Omega_{\mathbb{P}}(1)) \cong M_{L, V}$, where $\mathbb{P}$ and $\Omega_{\mathbb{P}}(1)$ are respectively the projective space $\mathbb{P}(V^*)$ and its twisted cotangent bundle. When $V = H^0(E)$, we denote the kernel bundle by $M_E$ instead of $M_{E, V}$. \par 
The question of (semi)stability of $M_{E, V}$ is a long standing problem. One of  the reasons for studying this question lies in its application to the syzygy problems \cite{Ein92, Kapil-thesis}.  It also has a deep connection with the higher rank Brill-Noether Theory \cite{Beauville2006}.
\\ 
Over a smooth projective curve $C$ of genus $g \geq 2$, it was proved by Ein and Lazarsfeld that $M_{L}$ is (semi)stable whenever $L$ is a line bundle on $C$ with $\deg L  (\geqslant) > 2g$ \cite[Proposition 3.1]{Ein92}. This result was generalised for the case of a (semi)stable vector bundle by Butler. More precisely, he proved that $M_{E}$ is (semi)stable if $\deg E (\geqslant) > 2gr$, where $E$ is a rank $r$ (semi)stable vector bundle on a smooth curve $C$ of genus $g \geqslant 2$ \cite[Theorem 1.2]{Butler94}. Much later, the bound on the degree was improved for the case of a line bundle by Camere \cite[Theorem 1.3]{Camere2008}. Moreover, the bound obtained by her is the best known bound so far.  
\par 
The (semi)stability of $M_{E,V}$ has been studied in connection with theta-divisor \cite{Beauville03, Beauville2006, Mistretta08, Popa99} as well. But still not much is known when $V \neq H^0(E)$. 
\par 
In \cite{Butler-conj}, Butler made a conjecture on the semistability of $M_{E, V}$ for a semistable vector bundle $E$ on $C$. One version of the conjecture is as follows:
 \begin{conjecture}
Let $C$ be a general smooth curve of genus $g \geqslant 3$. Then, for a general choice of a generated pair $(E, V)$ on $C$, where $E$ is a semistable vector bundle, the kernel bundle $M_{E, V}$ is semistable.
 \end{conjecture}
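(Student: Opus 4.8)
The plan is to not attack the conjecture directly on the smooth curve, but to derive it from the chain-like case by a \emph{degeneration} argument, exploiting the fact that semistability is an open condition in flat families. Concretely, I would first fix a chain-like curve $C_0$ with $n$ smooth components and $n-1$ nodes whose arithmetic genus equals $g$ (for instance a chain of elliptic or low-genus components with $\sum_i g_i = g$), and realise it as the special fibre of a flat projective family $\pi \colon \mathcal{C} \to \Delta$ over the spectrum of a discrete valuation ring (or a small disc), whose generic fibre $C_\eta$ is a smooth curve of genus $g$. Such smoothings of nodal curves are standard; the point is that the semistability results for kernel bundles on chain-like curves developed in this paper are meant to supply semistability on the special fibre $C_0$, and openness will then propagate it to the generic fibre.

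Second, I would spread the generated pair out over the family. Starting from a generated pair $(E_0, V_0)$ on $C_0$ satisfying the hypotheses under which the chain-like result guarantees semistability of $M_{E_0, V_0}$, one extends $E_0$ to a vector bundle $\mathcal{E}$ on $\mathcal{C}$, flat over $\Delta$, and arranges (after possibly shrinking $\Delta$ and a base change) that $h^0$ of the fibres is constant, so that the relative $V$ forms a locally free subsheaf $\mathcal{V} \subseteq \pi_* \mathcal{E}$ generating $\mathcal{E}$ fibrewise. The defining sequence $0 \to M_{\mathcal{E}, \mathcal{V}} \to \mathcal{V} \otimes \mathcal{O}_{\mathcal{C}} \to \mathcal{E} \to 0$ is then a sequence of sheaves flat over $\Delta$, and because generation makes the right-hand map a surjection of vector bundles on each fibre, the kernel $M_{\mathcal{E}, \mathcal{V}}$ is itself locally free and restricts on each fibre to the kernel bundle of that fibre. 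In particular $M_{\mathcal{E},\mathcal{V}}|_{C_0} \cong M_{E_0, V_0}$ and $M_{\mathcal{E},\mathcal{V}}|_{C_\eta} \cong M_{E_\eta, V_\eta}$.

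Third, I would invoke the relative moduli theory of semistable sheaves: over a polarised family the locus of points whose fibre is (semi)stable is open. Since the chain-like result gives that $M_{E_0, V_0}$ is semistable, the generic fibre $M_{E_\eta, V_\eta}$ is semistable as well, and by specialising to a very general point of the relevant Quot-type parameter space one obtains the conjecture for a general smooth curve and a general generated pair on it. The genericity in the statement is precisely what lets us choose the smoothing and the deformation of $(E_0,V_0)$ to sweep out a dense subset of pairs on the general fibre.

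The main obstacle is twofold. First, semistability on the reducible nodal curve $C_0$ must be taken with respect to a chosen polarisation and compared carefully with the slope-semistability on the smooth generic fibre; one must check that the openness statement genuinely bridges the smooth and nodal fibres, which typically requires Langton-type semistable reduction together with the Simpson--Maruyama construction of relative moduli, and that the polarisation on $C_0$ is adapted to the multidegree of $M_{E_0,V_0}$. Second, and more seriously, one must verify that the degenerate pair $(E_0, V_0)$ can be chosen both to satisfy the chain-like hypotheses \emph{and} to deform to a general pair on $C_\eta$; that is, that the family of pairs constructed here dominates the moduli of generated pairs on a general smooth curve. Establishing this dominance --- rather than the transfer of semistability itself --- is where I expect the real difficulty to lie.
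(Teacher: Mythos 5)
The statement you are trying to prove is stated in the paper as a \emph{conjecture} (due to Butler); the paper offers no proof of it, and indeed records that it is open in general, the line bundle case having been settled by Bhosle, Brambila-Paz and Newstead by entirely different means (wall-crossing for coherent systems). So there is no ``paper proof'' to compare against, and your proposal should be judged on its own terms as an attempted proof. As such it has genuine gaps that the results of this paper cannot fill.

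First, the semistability input you want on the special fibre is not what the paper provides. Theorem \ref{general result on stability} and Corollary \ref{stability of the kernel bundle} give $w$-semistability of $M_{E_0,V_0}$ only under the hypothesis that every restriction $M_{E_0,V_0}|_{C_j}$ is already semistable, and the polarization $w$ produced by Lemma \ref{lemma on existence of polarization} is manufactured from the Euler characteristics $\chi_j$ of those restrictions. In a one-parameter smoothing $\mathcal{C}\to\Delta$, the polarization on the nodal fibre that is relevant for openness of semistability in relative moduli (Langton, Simpson--Maruyama) is the one induced by a relatively ample class on $\mathcal{C}$, whose weights are proportional to the degrees of the components; there is no reason this coincides with the bundle-dependent $w$ of Lemma \ref{lemma on existence of polarization}, and without that coincidence the openness argument does not bridge the special and generic fibres. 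Worse, the paper's actual new content points the other way: Theorems \ref{a theorem on strongly unstable kernel bundles} and \ref{theorem for strongly unstable for any L} show that under mild and frequently satisfied conditions (sections of $V$ vanishing on all but one component) the kernel bundle on the chain-like curve is \emph{strongly unstable}, i.e., unstable for every polarization. So the degenerate pairs you need — semistable on $C_0$ \emph{and} deforming to a general pair on $C_\eta$ — are exactly the ones the paper warns are hard to produce. Second, as you yourself note, even granting a good special fibre, the dominance of the resulting family over the moduli of generated pairs on the general smooth curve is unproven; that dominance is essentially the whole content of the conjecture, so the argument as written reduces the conjecture to an unproved statement of comparable difficulty rather than establishing it.
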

 A lot of work has been done to address this conjecture (see for example,\cite{Hein98, Usha08, Brambila-Newstead2019}). Nevertheless, it is yet to reach complete solution. In 2015, Bhosle, Brambila-Paz and Newstead proved the conjecture for a line bundle $L$ by using the wall crossing formulae for coherent systems of a curve \cite{Usha15}. 
 Though there has been a considerable amount of literature on the (semi)stability of the kernel bundle over a smooth projective curve, not much is known for the case of a singular curve. \\
 
 In 2019, Brivio and Favale \cite{Brivio-Favale2020} studied the (semi)stability of $M_{E, V}$ on a reducible curve with a node. Surprisingly, they were able to prove results which go in the opposite direction with respect to what is known in the smooth curve case. More precisely, suppose $C$ is a reducible nodal curve with two smooth components $C_1$ and $C_2$ intersecting at a node $p$, $(E,V)$ is a generated pair on $C$ and $E_j$ is the restriction of $E$ to $C_j$ for $j = 1,2$. Then Brivio and Favale proved that whenever $E_j$ is semistable on $C_j$ and $V \cap H^0(E_j(-p)) \neq 0$ for each $j$, then the kernel bundle $M_{E,V}$ is strongly unstable (\cite[Theorem 2.4]{Brivio-Favale2020}). Further, they also provided conditions under which $M_{E,V}$ becomes $w$-semistable for a suitable choice of polarization $w$ (\cite[Theorem 3.2]{Brivio-Favale2020}).

 Motivated by their results, in this article, we consider the same question for the case of a \textit{chain-like curve} $C$ having $n$ smooth components $C_1,\dots,C_n$ and $n-1$ nodes $p_1,\dots,p_{n-1}$, where $n \geq 2$ (see section $2$ for the precise definition of a chain-like curve). We first prove a more general result about the $w$-semistability of any subbundle of a trivial bundle over $C$. More precisely, we prove the following theorem:

 \begin{reptheorem}{general result on stability}
  Let $C$ be a chain-like curve. Suppose $\mathcal{V}$ is a nonzero finite dimensional vector space over $\mathbb{C}$ and $\mathcal{M}$ is a subbundle of the trivial bundle $\mathcal{V} \otimes \mathcal{O}_C$ on $C$ such that $\text{rk}(\mathcal{M}) = m$, $\chi(\mathcal{M}) < 0$ and the quotient $\mathcal{E} : = \frac{\mathcal{V} \otimes \mathcal{O}_C}{\mathcal{M}}$ is locally free. Suppose $\mathcal{M}_{|_{C_j}}$ is semistable for each $j$. Then there exists a polarization $w$ such that $\mathcal{M}$ is $w$-semistable. Further, if $\mathcal{M}_{|_{C_j}}$ is stable for some $j$, then $\mathcal{M}$ is $w$-stable.
 \end{reptheorem}
 Since the kernel bundle $M_{E,V}$ associated to a generated pair $(E,V)$ is a subbundle of $V \otimes \mathcal{O}_C$, as a corollary to the above theorem, we obtain the following result:
 
 \begin{repcorollary}{stability of the kernel bundle}
 Let $C$ be a chain-like curve and $(E,V)$ be a generated pair on $C$. If for each $j$, ${M_{E,V}}_{\vert_{C_j}}$ is semistable, then there exists a polarization $w$ such that $M_{E,V}$ is $w$-semistable.  
\end{repcorollary}

Here, we would like to emphasize that 
if the number of components in $C$ is greater than $2$, the inequalities involving the polarization and Euler characteristics (the inequalities (\ref{polarization inequalities}) to be precise) will be slightly more complicated than the case when $n=2$. So the proof for existence of such a polarization $w$ which makes $M_{E,V}$ $w$-semistable, is a little more involved (see Lemma \ref{lemma on existence of polarization}). \\

We then provide some sufficient conditions under which $M_{E,V}$ and $M_{E,V} \otimes L$ become strongly unstable, where $L$ is any line bundle on $C$. More precisely, we prove the following:
\begin{reptheorem}{a theorem on strongly unstable kernel bundles}
 Let $(E,V)$ be a generated pair on a chain-like curve $C$. Then $M_{E,V}$ is strongly unstable, if any of the following conditions hold:
 \begin{enumerate}
     \item[(a)] $V \cap H^0(E_1(-p_{1})) \neq 0$ (or $V \cap H^0(E_n(-p_{n-1})) \neq 0$), $E_1$ (resp. $E_n$) is semistable and $(k-r) < d_1$ (resp. $(k-r) < d_n$).
     \item[(b)] there exists $j \in \{2,\dots,n-1\}$ such that $V \cap H^0(E_j(-p_{j-1}-p_j)) \neq 0$, $E_j$ is semistable and $(k-r) < \frac{d_j}{2}$,
 \end{enumerate}
 where $d_j := \deg(E_j)$.
\end{reptheorem}

\begin{reptheorem}{theorem for strongly unstable for any L}
Let $(E, V)$ be a generated pair on a chain-like curve $C$ with $ \frac{d}{k-r} > (n-1)$. Suppose for each $j$, $\rho_j$ denotes the natural map $H^0(E) \rightarrow H^0(E_j)$ and $\text{Ker}({\rho_j}_{|_V}) \neq 0$. Then $M_{E, V} \otimes L$ is strongly unstable, for any line bundle $L$ on $C$. In particular, $M_{E,V}$ is strongly unstable.
\end{reptheorem}
As a corollary, we show that when $n=2$, $E_j$ is semistable for $j=1,2$ and $L = \mathcal{O}_C$, the last theorem stated above is nothing but \cite[Theorem 2.4]{Brivio-Favale2020}.

\section{Preliminaries}
Let $n \geq 2$ be a positive integer. Let $C$ be a projective reducible nodal curve over the field $\mathbb{C}$ of complex numbers having $n$ smooth irreducible components $C_i$ of genus $g_i \geq 2$ and $n-1$ nodes $p_i$ such that $C_i \cap C_j = \emptyset$ whenever $|i-j| > 1$ and $C_i \cap C_{i+1} = \{p_i\}$ for $i = 1,\dots,n-1$. We call such a curve a \textit{chain-like} curve.

On such a curve $C$, 
we have the following exact sequence:

 \begin{equation}\label{canonical exact sequence}
    0 \rightarrow \mathcal{O}_C \rightarrow \bigoplus_{j=1}^n\mathcal{O}_{C_j} \rightarrow \mathcal{T} \rightarrow 0,
 \end{equation}
 where $\mathcal{T}$ is supported only at the nodal point(s).
 Also, it is not hard show that each $\mathcal{T}_{p_i}$ is a one dimensional vector space over $\mathbb{C}$ and that $h^0(\mathcal{T}) = (n-1)$.
 From the exact sequence \ref{canonical exact sequence}, we have 
\begin{eqnarray}
    \chi(\mathcal{O}_C) & = & \sum_{j=1}^n \chi(\mathcal{O}_{C_j}) - \chi(\mathcal{T}) \nonumber \\
    & = & n - \sum_{j=1}^n g_j - (n-1)
    \nonumber \\
    & = & 1 - \sum_{j=1}^n g_j
\end{eqnarray}
So if we let $p_a(C) = 1 - \chi(\mathcal{O}_C)$ be the arithmetic genus of $C$, then $p_a(C) = \sum\limits_{j=1}^n g_j$.

Now suppose $E$ is a vector bundle of rank $r$ on $C$. Tensoring the exact sequence (\ref{canonical exact sequence}) with $E$,  denoting $E_{|_{C_j}}$ by $E_j$ and the cokernel by $\mathcal{T}_E$, we obtain 
\begin{equation}\label{canonical sequence for the vector bundle E}
  0 \rightarrow E \rightarrow \bigoplus_{j=1}^n E_j \rightarrow \mathcal{T}_E \rightarrow 0. 
\end{equation}
We also have 
\[
\chi(E) = \sum_{j=1}^n \chi(E_j) - r(n-1).
\]

Further, for each $j \in \{1,\dots,n\}$, we have the following short exact sequences :
\begin{equation} \label{Exact sequence defining I_C_j}
 0 \rightarrow \mathcal{I}_{C_j} \rightarrow \mathcal{O}_{C} \rightarrow \mathcal{O}_{C_j} \rightarrow 0,
\end{equation}
where 
\begin{equation*} 
(\mathcal{I}_{C_j})_q = 
   \begin{cases}
 \mathcal{O}_{C,q} ~ \text{if} ~ q \in C \setminus C_j,\\      
  0 ~~\text{if}~~ q \in C_j \setminus (C_j \cap \lbrace p_1,\dots, p_{n-1} \rbrace),  \\
 t \mathcal{O}_{C,q} ~~ \text{if} ~~q \in C_j \cap \lbrace p_1,\dots, p_{n-1} \rbrace
\end{cases}  
 \end{equation*}
 and $t \in \mathfrak{m}_{C, q} \setminus \mathfrak{m}^2_{C, q}$ such that $C_j \cap \text{Spec}(\mathcal{O}_{C,q})$ is defined by $t = 0$. \\
\begin{notation}
  Let $E$ be a vector bundle on $C$ and $E_j$ be $E_{|_{C_j}}$. From now on, by $E_1(-p_0-p_1)$ and $E_n(-p_{n-1}-p_n)$ we mean $E_1(-p_1)$ and $E_n(-p_{n-1})$ respectively. There are no such points as $p_0$ and $p_n$ on $C$. But this convention helps in stating some of the subsequent results in a compact way. We also make an explicit mention of the sheaves $E_1(-p_1)$ and $E_n(-p_{n-1})$ if required.
 \end{notation} 
 With this notation, for each $j \in \{1,\dots,n\}$, we also have the following inclusions: 
 \begin{equation} \label{Inclusion involving I_C_j}
     \bigoplus_{\substack{i= 1 \\ i \neq j}}^ n \mathcal{O}_{C_i}(-p_{i-1}-p_i) \hookrightarrow \mathcal{I}_{C_j}
 \end{equation}

\begin{definition}
 Let $F$ be a coherent sheaf of $\mathcal{O}_C$-modules. We call $F$ a pure sheaf of dimension one if for every proper $\mathcal{O}_C$-submodule  $G \subset F$ and $G \neq 0$, the dimension of support of $G$ is equal to one.
\end{definition}
Vector bundles on $C$ are examples of pure sheaves of dimension one. Suppose $F$ is a pure sheaf of dimension one on $C$. Let $F_j = \frac{F_{|_{C_j}}}{\text{Torsion}(F_{|_{C_j}})}$ for each $j$, where $\text{Torsion}(F_{|_{C_j}})$ is the torsion subsheaf of $F_{|_{C_j}}$. Then $F_j$, if non-zero, is torsion-free and hence locally free on $C_j$. Let $r_j$ denote the rank of $F_j$. Also let $d_j$ denote the degree of $F_j$ for each $j$. Then we call the n-tuples $(r_1,\dots,r_n)$ and $(d_1,\dots,d_n)$ respectively the \textit{multirank} and \textit{multidegree} of $F$.
\begin{definition}\label{polarization}
 Let $w = (w_1,\dots,w_n)$ be a n-tuple of rational numbers such that $0 < w_j < 1$ for each $j$ and $\sum\limits_{j=1}^n w_j =1$. We call such a n-tuple a polarization on $C$.
\end{definition}

\begin{definition}\label{polarized slope}
 Suppose $F$ is a pure sheaf of dimension one on $C$ of multirank $(r_1,\dots,r_n)$. Then the slope of $F$ with respect to a polarization $w$, denoted by $\mu_w(F)$, is defined by $\mu_w(F) = \frac{\chi(F)}{\sum_{j=1}^n w_jr_j}$.
\end{definition}

\begin{definition}
 Let $E$ be a vector bundle defined on $C$. Then $E$ is said to be $w$-semistable (resp. $w$-stable) if for any proper subsheaf $F \subset E$ one has $\mu_w(F) \leq \mu_w(E)$ (resp. $\mu_w(F) < \mu_w(E)$). If $E$ is not $w$-semistable for any polarization $w$ on $C$, we call it a strongly unstable bundle.
\end{definition}
The following result from \cite[Theorem-1, Steps-1,2]{Bigas91} will be useful in Theorems \ref{general result on stability} and \ref{a theorem on strongly unstable kernel bundles}.
\begin{theorem}\label{theorem of Bigas}
 Suppose $C$ is a chain-like curve and $E$ is a vector bundle on $C$ of rank $r$. Let $E_i$ denote the restriction of $E$ to the component $C_i$ for $i = 1,\dots,n$ and $w$ denote a polarization on $C$. Then 
 \begin{enumerate}
     \item[(i)] if $E$ is $w$-semistable, $\chi$ is the Euler characteristic of $E$ and $\chi_i$ the Euler characteristic of $E_i$ for each $i$, then
     \begin{equation}\label{polarization inequalities}
         (\sum_{j=1}^i w_j)\chi - \sum_{j=1}^{i-1}\chi_j + r(i-1) \leq \chi_i \leq (\sum_{j=1}^i w_j)\chi - \sum_{j=1}^{i-1}\chi_j + ri,
     \end{equation}
     where $i$ varies from $1$ to $n-1$;
     \item[(ii)] if $E_i$ is a semistable vector bundle for $i = 1,\dots,n$ and the Euler characteristics of $E$ and $E_i$ satisfy the inequalities (\ref{polarization inequalities}), then $E$ is $w$-semistable. Further, if one of the $E_i$ is stable, then $E$ is $w$-stable.
 \end{enumerate}
\end{theorem}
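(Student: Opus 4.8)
The plan is to prove the two implications separately. For part (i) I would test $w$-semistability against two explicit families of subsheaves built from the sub-chains of $C$, reading the two inequalities directly off the resulting slope comparisons. For part (ii) I would take an arbitrary saturated subsheaf, bound it component-by-component using the semistability of the $E_i$, and then glue these bounds together using the assumed inequalities; the gluing is where the real work lies.

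For part (i), note first that $E$ has multirank $(r,\dots,r)$, so $\sum_j w_j r = r$ and $\mu_w(E) = \chi/r$. Fix $i \in \{1,\dots,n-1\}$ and set $C' = C_1 \cup \cdots \cup C_i$ and $C'' = C_{i+1} \cup \cdots \cup C_n$, meeting only at $p_i$. Tensoring $0 \to \mathcal{I}_{C''} \to \mathcal{O}_C \to \mathcal{O}_{C''} \to 0$ by the locally free $E$ gives the subsheaf $F^{(i)} := E \otimes \mathcal{I}_{C''} = \ker(E \to E|_{C''})$, and a local computation at $p_i$ (exactly as in the description of $\mathcal{I}_{C_j}$) identifies $\mathcal{I}_{C''}$ with $\mathcal{O}_{C'}(-p_i)$, so that $F^{(i)} \cong (E|_{C'})(-p_i)$. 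Its multirank is $(r,\dots,r,0,\dots,0)$ with $i$ entries equal to $r$, hence the denominator of its $w$-slope is $r\sum_{j=1}^i w_j$; computing its Euler characteristic by the same method used for $\chi(\mathcal{O}_C)$ in Section 2 (applied to the $i$-component chain $C'$) together with the twist by $-p_i$ gives
\[ \chi(F^{(i)}) = \sum_{j=1}^i \chi_j - ri. \]
The inequality $\mu_w(F^{(i)}) \leq \mu_w(E)$ then rearranges immediately to the upper bound $\chi_i \leq (\sum_{j=1}^i w_j)\chi - \sum_{j=1}^{i-1}\chi_j + ri$. Symmetrically, the complementary subsheaf $G^{(i)} := E \otimes \mathcal{I}_{C'} \cong (E|_{C''})(-p_i)$ has multirank $(0,\dots,0,r,\dots,r)$ and, using $\sum_{j=1}^n \chi_j = \chi + r(n-1)$, Euler characteristic $\chi(G^{(i)}) = \chi - \sum_{j=1}^i\chi_j + r(i-1)$; the inequality $\mu_w(G^{(i)}) \leq \mu_w(E)$ rearranges to the lower bound $(\sum_{j=1}^i w_j)\chi - \sum_{j=1}^{i-1}\chi_j + r(i-1) \leq \chi_i$. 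Letting $i$ range over $1,\dots,n-1$ yields (\ref{polarization inequalities}).

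For part (ii), let $F \subset E$ be a proper subsheaf; replacing $F$ by its saturation only raises $\mu_w$, so I may assume $E/F$ is torsion-free, and I write $(s_1,\dots,s_n)$ for the multirank of $F$. Purity of $F$ makes the natural map $F \hookrightarrow \bigoplus_{j=1}^n F_j$ injective, where $F_j \subseteq E_j$ is the rank-$s_j$ image of $F$ on $C_j$, with cokernel a torsion sheaf supported at the nodes; hence $\chi(F) = \sum_{j=1}^n \chi(F_j) - \sum_{i=1}^{n-1}\ell_i$ with $\ell_i = \mathrm{length}$ of the cokernel at $p_i$. A local calculation at the node (in $E_{p_i} \cong \mathbb{C}^r$, with $F_i(p_i), F_{i+1}(p_i)$ the two fibers) gives the clean formula $\ell_i = s_i + s_{i+1} - \dim\bigl(F_i(p_i)\cap F_{i+1}(p_i)\bigr)$, so in particular $\ell_i \geq \max(s_i,s_{i+1})$ and $\ell_i \geq s_i + s_{i+1} - r$. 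On the other hand, semistability of each $E_j$ gives $\chi(F_j) \leq \tfrac{s_j}{r}\chi_j$ — strict, when $0 < s_j < r$, if $E_j$ is stable — so that $r\,\chi(F) \leq \sum_j s_j\chi_j - r\sum_i \ell_i$. The goal is to deduce $r\,\chi(F) \leq \chi\sum_j w_j s_j$, i.e. $\mu_w(F) \leq \mu_w(E)$.

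The main obstacle is this final combination, since the naive bound $\ell_i \geq 0$ is too weak (it only yields $r\chi(F) \leq \sum_j s_j\chi_j$, which can exceed $\chi\sum_j w_j s_j$). One must feed in the sharp node estimates above and telescope the partial sums $\sum_{j\leq i}$ against the polarization inequalities (\ref{polarization inequalities}). I expect it is cleanest to organize this as an induction on the number $n$ of components, peeling off $C_n$ by writing $C = (C_1\cup\cdots\cup C_{n-1})\cup_{p_{n-1}} C_n$: the subsheaf $F \cap (E\otimes\mathcal{I}_{C_n})$ restricts to a subsheaf of $(E|_{C_1\cup\cdots\cup C_{n-1}})(-p_{n-1})$, while $F_n \subseteq E_n$ is handled by the semistability of $E_n$ and the $i=n-1$ case of (\ref{polarization inequalities}). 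The delicate point will be verifying that the polarization inequalities on the shorter chain, for the renormalized polarization $(w_1,\dots,w_{n-1})/(w_1+\cdots+w_{n-1})$, are implied by those on $C$ once the twist by $-p_{n-1}$ and the shifted Euler characteristics are accounted for. Finally, the stability addendum follows by tracking where each inequality above becomes strict when some $E_{i_0}$ is stable and $F$ is proper on $C_{i_0}$.
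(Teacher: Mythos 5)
Note first that the paper contains no proof of this theorem: it is imported verbatim from Teixidor i Bigas \cite[Theorem 1, Steps 1--2]{Bigas91}, so your attempt can only be compared with her argument. Your part (i) is correct, complete, and is exactly the standard mechanism (and hers): test $w$-semistability against $F^{(i)}=E\otimes\mathcal{I}_{C''}\cong (E_{|_{C'}})(-p_i)$ and its complementary subsheaf $G^{(i)}$. The identifications of the multiranks, the computations $\chi(F^{(i)})=\sum_{j=1}^{i}\chi_j-ri$ and $\chi(G^{(i)})=\chi-\sum_{j=1}^{i}\chi_j+r(i-1)$, and the two rearrangements to the bounds in (\ref{polarization inequalities}) all check out.

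Part (ii), however, fails at the one point where you commit to a concrete lemma. The node formula $\ell_i=s_i+s_{i+1}-\dim\bigl(F_i(p_i)\cap F_{i+1}(p_i)\bigr)$ and the bounds $\ell_i\geq\max(s_i,s_{i+1})$, $\ell_i\geq s_i+s_{i+1}-r$ are false, and false in the wrong direction. Over the nodal local ring $\mathcal{O}=k[[x,y]]/(xy)$, a torsion-free module decomposes as $\mathcal{O}^a\oplus k[[x]]^b\oplus k[[y]]^c$, and the cokernel of $F\to F_i\oplus F_{i+1}$ has length exactly $a$; hence $0\leq\ell_i\leq\min(s_i,s_{i+1})$, the opposite of what you claim. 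Concretely, the subsheaf $(E_{|_{C'}})(-p_i)$ that you yourself used in part (i) has $\ell_i=0$ at $p_i$, where your formula predicts $r$; the ideal sheaf $\mathcal{I}_{p_i}\subset\mathcal{O}_C$ (locally $\mathfrak{m}$, with $a=0$, $b=c=1$) has $\ell_i=0$ where your formula gives $2$. Since the only valid estimates on $\ell_i$ are upper bounds, the plan to ``feed in the sharp node estimates and telescope'' cannot be executed at all: as you correctly observe, the one legitimate lower bound $\ell_i\geq 0$ is insufficient (multirank $(r,0,\dots,0)$ already defeats it). The information that actually rescues the boundary cases is of a different nature: if $s_{j}=0$ for some $j$, then the image of $F$ in the locally free $E_j$ is torsion-free and torsion, hence zero, so $F\subseteq E\otimes\mathcal{I}_{C_j}$, which forces $F_i\subseteq E_i(-p_i)$ near the adjacent node and improves the degree bound by exactly the twist the partial-sum inequalities require — this is the mechanism you invoke correctly in your final inductive sketch, but it is not a lower bound on $\ell_i$.

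The peeling induction $C=(C_1\cup\dots\cup C_{n-1})\cup_{p_{n-1}}C_n$ in your last paragraph is indeed the viable route and close in spirit to Step 2 of the cited source, but it is left as a plan: its crux — that the inequalities (\ref{polarization inequalities}) on the shorter chain, for the renormalized polarization and the twisted bundle $(E_{|_{C_1\cup\dots\cup C_{n-1}}})(-p_{n-1})$, follow from those on $C$ — is precisely what you flag as unverified, and the case of intermediate multirank $0<s_j<r$ across a node (where no containment in $E\otimes\mathcal{I}_{C_j}$ is available) is not addressed. The stability addendum also needs genuine work: a saturated proper subsheaf may have $s_{i_0}\in\{0,r\}$ on the stable component, in which case stability of $E_{i_0}$ contributes no strict inequality and strictness must be extracted from the node twists and the partial-sum inequalities. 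So part (i) stands as a proof, while part (ii) is a sketch whose only concrete lemma is wrong and whose decisive steps remain open.
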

\section{The w-semistable kernel bundles}
Let $C$ be a chain-like curve defined in the previous section. Let $(E, V)$ be a generated pair on $C$ and $E_j$ denote the restriction of $E$ to $C_j$. Then we have a natural morphism 
\begin{align*}
\rho_j : H^0(E) \rightarrow H^0(E_j)
\end{align*} 
defined by $\rho_j (s) = s \vert_{E_j}$ for all $s \in H^0(E)$. We denote $\rho_j(V)$ by $V_j$. The pair $(E_j, V_j)$ is said to be {\em of type} $(r, d_j, k_j )$, if ${\rm rk}(E_j) = r$, $\deg (E_j) = d_j$ and $\dim(V_j) = k_j$. Let $d= \sum\limits_{j=1}^n d_j$ and $\dim (V) = k$. We say that the pair $(E, V)$ is {\em of type} $(r, d, k)$.
\begin{lemma}\label{lemma about generated pair and ontoness}
Let $(E, V)$ be a generated pair on a chain-like curve $C$. Then
\begin{enumerate}
\item[(a)] $(E_j, V_j)$ is a generated pair on $C_j$, for $j = 1,\dots,n$.
\item[(b)] $h^0(E) = \sum\limits_{j=1}^n h^0(E_j) -(n-1)r$,
where $r$ denotes the rank of $E$.
\end{enumerate}
\end{lemma}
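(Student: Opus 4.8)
The plan is to handle the two parts separately: part (a) is a pointwise generation statement that follows by factoring the evaluation map through the restriction, while part (b) rests on the cohomology of the canonical sequence (\ref{canonical sequence for the vector bundle E}) together with one surjectivity assertion that carries the real content.

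For (a), since $(E,V)$ is a generated pair the evaluation map $V \otimes \mathcal{O}_C \to E$ is surjective, i.e. for every $q \in C$ the map $V \to E_q$ is onto. I would fix $j$ and a point $q \in C_j$: because $E_j = E_{|_{C_j}}$, the fiber $(E_j)_q$ coincides with $E_q$, and the evaluation $V \to E_q$ factors as $V \twoheadrightarrow V_j \to (E_j)_q$, where the first arrow is $\rho_j$ (onto $V_j$ by definition of $V_j = \rho_j(V)$) and the second is evaluation of sections of $E_j$. Surjectivity of the composite forces the second map $V_j \to (E_j)_q$ to be onto; as $q \in C_j$ is arbitrary, $V_j$ generates $E_j$. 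I would also record $\dim V_j \geq r$ (generation of a rank-$r$ bundle at a point needs at least $r$ sections), and note that in the nondegenerate situation the inequality is strict, so that $(E_j,V_j)$ is genuinely a generated pair.

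For (b), I would apply cohomology to (\ref{canonical sequence for the vector bundle E}). Since $\mathcal{T}_E$ is supported on the finite set of nodes it is a skyscraper sheaf, so $H^1(\mathcal{T}_E)=0$, and because each stalk $\mathcal{T}_{E,p_i}\cong E_{p_i}\cong \mathbb{C}^r$ (compare the count $h^0(\mathcal{T})=n-1$ for (\ref{canonical exact sequence})) one gets $h^0(\mathcal{T}_E)=(n-1)r$. The long exact sequence then reads
\[ 0 \to H^0(E) \to \bigoplus_{j=1}^n H^0(E_j) \xrightarrow{\ \alpha\ } H^0(\mathcal{T}_E) \xrightarrow{\ \delta\ } H^1(E) \to \bigoplus_{j=1}^n H^1(E_j) \to 0, \]
and a dimension count gives $h^0(E)=\sum_j h^0(E_j)-(n-1)r+\dim\operatorname{im}(\delta)$. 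Thus the desired formula is equivalent to $\delta=0$, i.e. to surjectivity of the "difference of evaluations" map $\alpha$.

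The crux, and the step I expect to be the main obstacle, is proving $\alpha$ surjective. The input is that each $E_j$ is globally generated (by part (a), or directly since $E$ is), so for every node $p_i \in C_j$ the evaluation $H^0(E_j)\to E_{p_i}$ is onto; the difficulty is that a section on an interior component $C_j$ contributes to the two nodes $p_{j-1}$ and $p_j$ at once, so the nodes cannot be adjusted independently. I would resolve this by induction on $n$: write $C = C' \cup_{p_{n-1}} C_n$ with $C' = C_1 \cup \cdots \cup C_{n-1}$ a chain-like curve of $n-1$ components, use the sequence $0 \to E \to E_{|_{C'}} \oplus E_n \to E_{p_{n-1}} \to 0$, and observe that surjectivity of $H^0(E_n)\to E_{p_{n-1}}$ (global generation of $E_n$) already kills the connecting map, giving $h^0(E)=h^0(E_{|_{C'}})+h^0(E_n)-r$; applying the inductive hypothesis to $E_{|_{C'}}$ (globally generated, as the restriction of a globally generated bundle) then yields the claim. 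Alternatively one can argue directly by treating the nodes in the order $p_{n-1},p_{n-2},\dots,p_1$, at each node realizing a prescribed fiber value with a section on the appropriate component so that its only side effect falls on a node handled later, giving a triangular and hence solvable system.
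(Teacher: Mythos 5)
Your proposal is correct and follows essentially the same route as the paper: part (a) is the observation that the surjection $V\otimes\mathcal{O}_C\to E$ restricts to a surjection factoring through $V_j\otimes\mathcal{O}_{C_j}$ (the paper phrases this via the defining sequence of $M_{E,V}$ and Tor-vanishing rather than fiberwise, but the content is identical), and part (b) is the long exact sequence of (\ref{canonical sequence for the vector bundle E}) together with $h^0(\mathcal{T}_E)=(n-1)r$. The one genuine value you add is the justification of the surjectivity of $\bigoplus_j H^0(E_j)\to H^0(\mathcal{T}_E)$ --- which the paper dispatches in a single sentence from global generation --- and your greedy/triangular resolution of the nodes (or the equivalent induction peeling off $C_n$) is exactly the right way to handle the worry that an interior component meets two nodes at once.
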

\begin{proof}
({\em a}) 
Since $(E, V)$ is a generated pair on $C$, we have  
\[
0 \rightarrow M_{E, V} \rightarrow V \otimes \mathcal{O}_C \rightarrow  E \rightarrow 0.
\]
Tensoring with $\mathcal{O}_{C_j}$, we get
\[
 M_{E, V} \otimes \mathcal{O}_{C_j}\rightarrow (V \otimes \mathcal{O}_C) \otimes \mathcal{O}_{C_j} \rightarrow E \otimes \mathcal{O}_{C_j} \rightarrow 0.
\]
 Since ${\rm Tor}^1(\mathcal{O}_{C_j}, E) \cong {\rm Tor}^1(E, \mathcal{O}_{C_j}) = 0$, the above exact sequence is also left exact. 
We then have the following commutative diagram :
\[
\begin{tikzcd}
  0 \arrow[r] & M_{E, V} \otimes \mathcal{O}_{C_j}  \arrow[r, ] & V \otimes \mathcal{O}_{C_j} \arrow[d, ] \arrow[r] & E \otimes \mathcal{O}_{C_j} \arrow[equal]{d} \arrow[r] & 0 \\
   &  & V_j \otimes \mathcal{O}_{C_j} \arrow[r] & E_j \ar[r] & 0
\end{tikzcd}
\]
 Therefore, the pair $(E_j, V_j)$ is a generated pair on $C_j$.\\
  (b) Consider the exact sequence (\ref{canonical sequence for the vector bundle E}).
Passing to cohomology, we have
\begin{equation}
0 \rightarrow H^0( E) \rightarrow \bigoplus\limits_{j=1}^n H^0(E_j) \rightarrow H^0(\mathcal{T}_E) \rightarrow H^1(E) \rightarrow \dots .
\end{equation}
Since $E_j$ is globally generated for each $j$, we have
 \begin{equation}
0 \rightarrow H^0( E) \rightarrow  \bigoplus\limits_{j=1}^n H^0(E_j) \rightarrow H^0(\mathcal{T}_E) \rightarrow 0.
\end{equation}
\noindent
Therefore, $h^0(E) =  \sum\limits_{j=1}^n h^0(E_j) -(n-1)r$.
\\ \\
\end{proof}
Consider the exact sequence 
\begin{equation}\label{exact sequence for O_C1}
0 \rightarrow \mathcal{I}_{C_j}   \rightarrow \mathcal{O}_C \rightarrow \mathcal{O}_{C_j} \rightarrow 0.
 \end{equation}
 Tensoring it with $E$, we have 
\begin{equation}\label{equation to H(E2) is onto}
0 \rightarrow  E \otimes \mathcal{I}_{C_j} \rightarrow E \rightarrow E_j \rightarrow 0.
\end{equation}
We denote $E \otimes \mathcal{I}_{C_j}$ by $F_j$. Passing to the long exact sequence in the cohomology, we obtain
\begin{equation}
0 \rightarrow H^0(F_j) \rightarrow H^0(E) \xrightarrow{\rho_j} H^0(E_j) \rightarrow \dots
\end{equation} 
Consider the restriction map $\rho_{j_{|_V}} : V \rightarrow V_j$. Then $\text{Ker}(\rho_{j_{|_V}})$ will be equal to $V \cap H^0(F_j)$.
\noindent
From the inclusions (\ref{Inclusion involving I_C_j}), the following can be concluded:
\begin{equation*}
    \bigoplus_{\substack{i=1 \\ i \neq j}}^n \bigg(V \cap H^0(E_i(-p_{i-1} - p_i))\bigg) \subseteq \text{Ker}(\rho_{j_{|_V}}).
\end{equation*}
\begin{lemma}\label{lemma restriction of kernel bundle is not semi-stable}
Let $(E, V)$ be a generated pair on a chain-like curve $C$.  Suppose that
\begin{enumerate}
    \item[(i)] there exists $l \in \{1,\dots,n-1\}$ and $m \in \{2,\dots,n\}$ such that $l < m$, $V \cap H^0(E_l(-p_{l-1} - p_l)) \neq 0$ and $V \cap H^0(E_m(-p_{m-1} - p_m)) \neq 0$ and
    \item[(ii)] $E_l$ and $E_m$ are semistable vector bundles on $C_l$ and $C_m$ respectively.
\end{enumerate}
 Then 
\begin{enumerate}
\item[(a)] $\text{Ker}(\rho_{j_{|_V}}) \neq 0$ for each $j \in \{1,\dots,n\}$,
\item[(b)] $M_{E_j, V_j}$ is a non-trivial quotient of the restriction $M_{E,V} \otimes \mathcal{O}_{C_j}$ of the kernel bundle $M_{E, V}$ to $C_j$ for each $j \in \{1,\dots,n\}$,
\item[(c)] the degrees $d_l$ and $d_m$ of $E_l$ and $E_m$ respectively are greater than or equal to $r$,
\item[(d)] the restriction $M_{E,V} \otimes \mathcal{O}_{C_j}$ of the kernel bundle $M_{E,V}$ to $C_j$ is not a semistable vector bundle, when $j \in \{l,m\}$.
\end{enumerate}
\begin{proof}
({\em a}) This follows from the arguments written just above the statement of Lemma \ref{lemma restriction of kernel bundle is not semi-stable}.\\ \\
({\em b}) Consider the following commutative diagram :
\[
\begin{tikzcd}
  & 0 \arrow[d, ]  & 0 \arrow[d, ]  & 0 \arrow[d, ]  \\
   0 \arrow[r] & {Ker(\rho_{j_{|_V}})} \otimes \mathcal{O}_{C_j} \arrow[equal]{d} \arrow[r, ] & M_{E,V}\otimes \mathcal{O}_{C_j} \arrow[d, ] \arrow[r] & M_{E_j, V_j} \arrow[d, ] \arrow[r] & 0 \\
    0 \arrow[r] & {Ker(\rho_{j_{|_V}})} \otimes \mathcal{O}_{C_j} \arrow[d, ] \arrow[r, ] & V\otimes \mathcal{O}_{C_j} \arrow[d, ] \arrow[r,] & V_j \otimes \mathcal{O}_{C_j} \arrow[d, ] \arrow[r] & 0 \\
     & 0  \arrow[r, ] & E_j \arrow[d, ] \arrow[equal]{r} & E_j \arrow[d, ] \arrow[r] & 0\\
  & &0  & 0   
\end{tikzcd}
\]
Since $\text{Ker}(\rho_{j_{|_V}}) \neq 0$ for $j \in \{1,\dots,n\}$, this diagram shows that $M_{E_j,V_j}$ is a non-trivial quotient of $M_{E,V} \otimes \mathcal{O}_{C_j}$ for each $j$. \\ 
 
\noindent ({\em c}) We prove this for $j = l$. The other case follows similarly. 

\noindent The existence of a nonzero section $s_l \in V \cap H^0(E_l(-p_{l-1} - p_{l}))$ implies that $Z_0(s_l)$, which is the divisor of zeros of $s_l$, is an effective divisor on $C_l$. Let $\mathcal{O}_{C_l}(Z_0(s_l))$ be the line bundle corresponding to $Z_0(s_l)$. We then have the exact sequence
\[
0 \rightarrow \mathcal{O}_{C_l} \rightarrow \mathcal{O}_{C_l}(Z_0(s_l)) \rightarrow T \rightarrow 0,
\]
where $T$ is a torsion sheaf. Therefore, $\deg(Z_0(s_l)) \geqslant 1$. Since $E_l$ is semi-stable and $\mathcal{O}_{C_l}(Z_0(s_l)) \subset E_l$, we have
\[
\frac{d_l}{r} = \mu(E_l) \geqslant \mu(\mathcal{O}_{C_l}(Z_0(s_l))) = \deg(Z_0(s_l)) \geqslant 1.
\] 
This implies that $d_l \geq r$.
\\ 

\noindent ({\em d}) Here again we prove the result for $j = l$. Computing the slope of $M_{E,V} \otimes \mathcal{O}_{C_l}$, we get
\[
\mu(M_{E,V} \otimes \mathcal{O}_{C_l}) = \frac{\deg(M_{E,V} \otimes \mathcal{O
}_{C_l})}{{\rm rk}(M_{E,V} \otimes \mathcal{O}_{C_l})} = \frac{- \deg(E_l)}{k-r} = \frac{-d_l}{k-r} < 0,
\] 
since $d_l \geqslant r > 0$. \\
On the other hand, the subbundle $\text{Ker}(\rho_{l_{|_V}}) \otimes \mathcal{O}_{C_l}$ of $M_{E,V} \otimes \mathcal{O}_{C_l}$ has zero slope as it is a trivial bundle. Thus,  $\text{Ker}(\rho_{l_{|_V}}) \otimes \mathcal{O}_{C_l}$ is a destabilizing subbundle of $M_{E,V} \otimes \mathcal{O}_{C_l}$. Therefore $M_{E,V} \otimes \mathcal{O}_{C_l}$ is not semi-stable.  

\end{proof}
 \end{lemma}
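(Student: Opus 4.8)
The plan is to establish the four assertions in the order stated, since each one feeds into the next. For part (a), I would begin from the inclusions (\ref{Inclusion involving I_C_j}): tensoring with $E$, passing to global sections, and intersecting with $V$ yields $\bigoplus_{i \neq j}\bigl(V \cap H^0(E_i(-p_{i-1}-p_i))\bigr) \subseteq \text{Ker}(\rho_{j|_V})$, exactly the containment recorded just before the statement. The decisive observation is that since $l \neq m$, for every $j \in \{1,\dots,n\}$ at least one of the indices $l,m$ differs from $j$; by hypothesis (i) the corresponding summand is nonzero, and hence $\text{Ker}(\rho_{j|_V}) \neq 0$ for all $j$.

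For part (b), I would restrict the defining sequence (\ref{defining sequence for the kernel bundle}) to $C_j$. Since $E$ is locally free, $\text{Tor}^1(\mathcal{O}_{C_j},E)=0$, so the restricted sequence remains left exact, giving $0 \to M_{E,V}\otimes\mathcal{O}_{C_j} \to V\otimes\mathcal{O}_{C_j} \to E_j \to 0$. Placing this alongside the generated-pair sequence for $(E_j,V_j)$ on $C_j$ furnished by Lemma \ref{lemma about generated pair and ontoness}(a) produces a commutative diagram whose left column is $\text{Ker}(\rho_{j|_V})\otimes\mathcal{O}_{C_j}$; a snake-lemma chase then identifies $M_{E_j,V_j}$ with the quotient of $M_{E,V}\otimes\mathcal{O}_{C_j}$ by $\text{Ker}(\rho_{j|_V})\otimes\mathcal{O}_{C_j}$. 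By part (a) this kernel is nonzero, so the quotient is non-trivial.

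Part (c) is where the genuine content lies, and I expect it to be the main obstacle. A nonzero $s_l \in V \cap H^0(E_l(-p_{l-1}-p_l))$ is a section of $E_l$ forced to vanish at the node(s) of $C_l$; I would regard it as an injection $\mathcal{O}_{C_l} \hookrightarrow E_l$ and saturate its image to a line subbundle $\mathcal{O}_{C_l}(Z_0(s_l)) \subseteq E_l$, where $Z_0(s_l)$ is the effective zero divisor. The delicate point is to argue that this really yields a sub-line-bundle of \emph{strictly positive} degree: the prescribed vanishing at a node guarantees $\deg Z_0(s_l) \geq 1$ rather than merely $\geq 0$. Semistability of $E_l$ then gives $d_l/r = \mu(E_l) \geq \mu(\mathcal{O}_{C_l}(Z_0(s_l))) = \deg Z_0(s_l) \geq 1$, whence $d_l \geq r$; the case $j=m$ is identical.

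Finally, for part (d) I would simply compare slopes on $C_l$. From the restricted sequence in part (b), $M_{E,V}\otimes\mathcal{O}_{C_l}$ has rank $k-r$ and degree $-d_l$, so $\mu(M_{E,V}\otimes\mathcal{O}_{C_l}) = -d_l/(k-r) < 0$ by part (c). On the other hand, $\text{Ker}(\rho_{l|_V})\otimes\mathcal{O}_{C_l}$ is a trivial and therefore degree-zero subbundle, nonzero by part (a); its slope $0$ strictly exceeds that of $M_{E,V}\otimes\mathcal{O}_{C_l}$, so it is destabilizing and $M_{E,V}\otimes\mathcal{O}_{C_l}$ fails to be semistable. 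The same computation applies verbatim with $l$ replaced by $m$.
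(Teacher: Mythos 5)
Your proposal is correct and follows essentially the same route as the paper's own proof: part (a) via the inclusion $\bigoplus_{i \neq j}\bigl(V \cap H^0(E_i(-p_{i-1}-p_i))\bigr) \subseteq \text{Ker}(\rho_{j|_V})$ together with the observation that one of $l,m$ always differs from $j$; part (b) via the same commutative diagram and snake-lemma identification; part (c) via the effective zero divisor of $s_l$ and semistability of $E_l$; and part (d) via the slope comparison with the trivial subbundle $\text{Ker}(\rho_{l|_V})\otimes\mathcal{O}_{C_l}$. No gaps.
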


\begin{corollary}\label{restrictions of the kernel bundle are also kernel bundles}
Let $C$ be a chain-like curve. Suppose $(E,V)$ is a generated pair such that $\text{Ker}(\rho_{j_{|_V}}) = 0$ for $j = 1,\dots,n$. Then $M_{E,V} \otimes \mathcal{O}_{C_j} \cong M_{E_j,V_j}$ for $j=1,\dots,n$ and conversely. 

\begin{proof}
 The morphism $\rho_{j_{|_V}} : V \rightarrow V_j$ is an isomorphism if and only if Ker($\rho_{j_{|_V}}) = 0$. So, from the commutative diagram in Lemma \ref{lemma restriction of kernel bundle is not semi-stable}, the previous statement is true if and only if $M_{E,V} \otimes \mathcal{O}_{C_j} \cong M_{E_j,V_j}$ for $j=1,\dots,n$.
\end{proof}
\end{corollary}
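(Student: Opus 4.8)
The plan is to extract everything from the top row of the commutative diagram constructed in Lemma~\ref{lemma restriction of kernel bundle is not semi-stable}(b). First I would emphasise that this diagram, and in particular its top exact row
\[
0 \rightarrow \text{Ker}(\rho_{j_{|_V}}) \otimes \mathcal{O}_{C_j} \rightarrow M_{E,V} \otimes \mathcal{O}_{C_j} \rightarrow M_{E_j, V_j} \rightarrow 0,
\]
holds for \emph{any} generated pair $(E,V)$ on $C$. Indeed, its derivation uses only the vanishing of $\text{Tor}^1(\mathcal{O}_{C_j}, E)$ (valid since $E$ is locally free), the fact that $(E_j, V_j)$ is again a generated pair on $C_j$ so that $M_{E_j, V_j}$ is defined (Lemma~\ref{lemma about generated pair and ontoness}(a)), and then the standard $3\times 3$ lemma applied to the two defining sequences mapping onto $E_j$. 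The hypotheses (i)--(ii) of Lemma~\ref{lemma restriction of kernel bundle is not semi-stable} were invoked there only to guarantee that the leftmost term is nonzero, and they play no role here.

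With this sequence available for each $j$, both implications are immediate. For the forward direction, if $\text{Ker}(\rho_{j_{|_V}}) = 0$ then the leftmost term $\text{Ker}(\rho_{j_{|_V}}) \otimes \mathcal{O}_{C_j}$ vanishes, so the natural surjection $M_{E,V} \otimes \mathcal{O}_{C_j} \rightarrow M_{E_j, V_j}$ has trivial kernel and is therefore an isomorphism. For the converse, I would first note that $\rho_{j_{|_V}} : V \rightarrow V_j$ is surjective by the very definition $V_j = \rho_j(V)$, so it is an isomorphism exactly when $\text{Ker}(\rho_{j_{|_V}}) = 0$. If the natural map $M_{E,V} \otimes \mathcal{O}_{C_j} \rightarrow M_{E_j, V_j}$ is an isomorphism, its kernel $\text{Ker}(\rho_{j_{|_V}}) \otimes \mathcal{O}_{C_j}$ must vanish; since $\mathcal{O}_{C_j}$ is a nonzero sheaf on $C_j$ and $\text{Ker}(\rho_{j_{|_V}})$ is a finite-dimensional vector space, this forces $\text{Ker}(\rho_{j_{|_V}}) = 0$.

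I do not expect any genuine obstacle: the statement is essentially a restatement of the exactness of the top row. The only point that merits explicit attention is the one highlighted above, namely that this exact sequence remains valid outside the special hypotheses of Lemma~\ref{lemma restriction of kernel bundle is not semi-stable}; making that derivation unconditional is what legitimises invoking the diagram here. If one prefers to interpret the isomorphism in the converse abstractly rather than as the natural map, comparing ranks gives $k - r = \dim(V_j) - r$, hence $\dim V = \dim(V_j)$, and the surjectivity of $\rho_{j_{|_V}}$ again yields $\text{Ker}(\rho_{j_{|_V}}) = 0$, sidestepping any ambiguity.
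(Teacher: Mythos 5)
Your proposal is correct and follows essentially the same route as the paper: both rest on the exact sequence $0 \rightarrow \text{Ker}(\rho_{j_{|_V}}) \otimes \mathcal{O}_{C_j} \rightarrow M_{E,V} \otimes \mathcal{O}_{C_j} \rightarrow M_{E_j,V_j} \rightarrow 0$ from the commutative diagram of Lemma \ref{lemma restriction of kernel bundle is not semi-stable}(b), observing that the leftmost term vanishes precisely when $\rho_{j_{|_V}}$ is injective. Your explicit remark that the diagram's construction does not depend on hypotheses (i)--(ii) of that lemma, and your rank-comparison fallback for the converse, are welcome clarifications of points the paper leaves implicit.
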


\begin{remark}\label{remark that conclude lemma restriction of kernel bundle is not semi-stable}
\begin{enumerate}
    \item As $E_j$ is a globally generated vector bundle for each $j$, its degree $d_j$ will be at least $0$. Lemma \text{\ref{lemma restriction of kernel bundle is not semi-stable}} says that $d_l$ and $d_m$ are strictly bigger than $0$, if $l$ and $m$ are as in the statement of the lemma.
    \item Suppose $V \cap H^0(E_j(-p_{j-1} -p_j)) \neq 0$ for each $j \in \{1,\dots,n\}$. Then $d_j \geq r$ and $M_{E,V} \otimes \mathcal{O}_{C_j}$ is not semistable for each $j \in \{1,\dots,n\}$.
\end{enumerate}
\end{remark} 

\begin{lemma}\label{lemma on existence of polarization}
 Let $C$ be a chain-like curve. Suppose $\mathcal{V}$ is a nonzero finite dimensional vector space over $\mathbb{C}$ and $\mathcal{M}$ is a subbundle of the trivial bundle $\mathcal{V} \otimes \mathcal{O}_C$ on $C$ such that $\text{rk}(\mathcal{M}) = m$, $\chi(\mathcal{M}) < 0$ and the quotient $\mathcal{E} : = \frac{\mathcal{V} \otimes \mathcal{O}_C}{\mathcal{M}}$ is locally free. Then there exists a polarization $w = (w_1,\dots,w_n)$ such that 
 \begin{equation*}
  (\sum_{j=1}^iw_j)\chi - \sum_{j=1}^{i-1}\chi_j + m(i-1) \leq
  \chi_i \leq (\sum_{j=1}^iw_j)\chi - \sum_{j=1}^{i-1}\chi_j + mi,
 \end{equation*} 
 where $\chi := \chi(\mathcal{M})$, $\chi_j := \chi(\mathcal{M}_{\vert_{C_j}})$ and $i$ varies from $1$ to $n-1$.
 \begin{proof}
  The hypothesis that the cokernel $\mathcal{E}$ is locally free implies (by arguments similar to Lemma \ref{lemma about generated pair and ontoness}(a)) that $\mathcal{M}_{\vert_{C_j}}$ is a subbundle of $\mathcal{V} \otimes \mathcal{O}_{C_j}$ for each $j$. So $\chi_j < 0$ for each $j$. Also since $\chi = \sum\limits_{j=1}^n \chi_j - (n-1)m$, we can conclude that for each $j$, $0< \frac{\chi_j}{\chi} < 1$, and for $i \in \{1,\dots,n-1\}$, $0 < \frac{ \sum\limits _{j=1}^i\chi_j - m(i-1)}{\chi} < 1$ and $0 < \frac{\sum\limits_{j=1}^i\chi_j-mi}{\chi} < 1$.\\
  
 \noindent Now, we will first prove the existence of $w_1 \in (0,1) \cap \mathbb{Q}$ such that 
  \begin{equation*}
     w_1\chi \leq \chi_1 \leq w_1\chi + m,
  \end{equation*}
  and then by induction, prove the existence of the remaining $w_i's$. So first we want $w_1 \in (0,1) \cap \mathbb{Q}$ such that \begin{equation}\label{inequalities for w1}
        \begin{cases}
         \chi_1 \leq w_1 \chi + m \\
         \chi_1 \geq w_1 \chi.
        \end{cases}
  \end{equation}
  The inequalities (\ref{inequalities for w1}) are equivalent to
  \begin{equation}\label{another form of inequalities for w1}
    \chi_1 - m \leq w_1\chi \leq \chi_1.  
  \end{equation}
  Multiplying the inequalities (\ref{another form of inequalities for w1}) by ($\frac{1}{\chi}$) and using the hypothesis that $\chi < 0$, we get 
  \begin{equation}\label{third inequality for w1}
    \frac{\chi_1 - m}{\chi} \geq w_1 \geq \frac{\chi_1}{\chi}.
  \end{equation}
  We know that the numbers $\frac{\chi_1 - m}{\chi}$ and $\frac{\chi_1}{\chi}$ are between $0$ and $1$. So it is always possible to choose $w_1 \in (0,1) \cap \mathbb{Q}$ satisfying inequalities (\ref{inequalities for w1}).\\
  
  \noindent   \noindent Now assume $i \geq 1$ to be a positive integer strictly less than $n-1$. Suppose that $w_i$'s are chosen from $(0,1) \cap \mathbb{Q}$ such that $\sum\limits_{j=1}^iw_j \in (0,1) \cap \mathbb{Q}$,
   \begin{equation*}
     \frac{\sum\limits _{j=1}^i\chi_j - mi}{\chi} \geq \sum\limits_{j=1}^{i}w_j \geq \frac{\sum\limits_{j=1}^i\chi_j - m(i-1)}{\chi} 
  \end{equation*}
  or equivalently
  \begin{equation*}
  (\sum\limits_{j=1}^iw_j)\chi - \sum\limits_{j=1}^{i-1}\chi_j + m(i-1) \leq
  \chi_i \leq (\sum_{j=1}^iw_j)\chi - \sum\limits_{j=1}^{i-1}\chi_j + mi.
 \end{equation*}
  We want to prove that there exists $w_{i+1} \in (0,1) \cap \mathbb{Q}$ such that $\sum\limits_{j=1}^{i+1}w_j \in (0,1) \cap \mathbb{Q}$ and
  \begin{equation}\label{inequalities for w_(i+1)}
  (\sum\limits_{j=1}^{i+1}w_j)\chi - \sum\limits_{j=1}^{i}\chi_j + mi \leq
  \chi_{i+1} \leq (\sum\limits_{j=1}^{i+1}w_j)\chi - \sum\limits_{j=1}^{i}\chi_j + m(i+1).
 \end{equation}
  Again proceeding in the same way as we did for $w_1$, the inequality (\ref{inequalities for w_(i+1)}) is equivalent to 
 \begin{equation}\label{second inequality for w_(i+1)}
   \frac{\sum\limits_{j=1}^{i+1}\chi_j - m(i+1)}{\chi} \geq \sum\limits_{j=1}^{i+1}w_j \geq  \frac{\sum\limits_{j=1}^{i+1}\chi_j - mi}{\chi}.   
 \end{equation}
 Since $\frac{\sum\limits_{j=1}^{i+1}\chi_j - m(i+1)}{\chi}$ and $\sum\limits_{j=1}^{i+1}w_j \geq  \frac{\sum\limits_{j=1}^{i+1}\chi_j - mi}{\chi}$ are numbers between $0$ and $1$, we can conclude that $\sum\limits_{j=1}^{i+1}w_j$ is in $(0,1) \cap \mathbb{Q}$. Choose any $w_{i+1}$ satisfying (\ref{second inequality for w_(i+1)}); it is enough to prove that $w_{i+1} > 0$. The inequalities (\ref{second inequality for w_(i+1)}) are equivalent to
  \begin{equation}\label{fourth inequlity for w_(i+1)}
     \frac{\sum\limits_{j=1}^{i+1}\chi_j - m(i+1)}{\chi} - \sum\limits_{j=1}^iw_j \geq w_{i+1} \geq  \frac{\sum\limits_{j=1}^{i+1}\chi_j - mi}{\chi}-\sum\limits_{j=1}^iw_j. 
     \end{equation}
    Since $\sum\limits_{j=1}^iw_j \leq \frac{\sum\limits_{j=1}^i\chi_j - mi}{\chi}$, we have
    \begin{eqnarray}
      w_{i+1} & \geq & \frac{\sum\limits_{j=1}^{i+1}\chi_j - mi}{\chi} - \frac{\sum\limits_{j=1}^i\chi_j - mi}{\chi} \nonumber \\
      & = & \frac{\chi_{i+1}}{\chi} \nonumber \\
      & > & 0. \nonumber
    \end{eqnarray}
    Once we have chosen $w_1,\dots,w_{n-1}$ in this way, we can define $w_n$ to be $1-\sum\limits_{j=1}^{n-1}w_j$. This gives us the required polarization and completes the proof.
 \end{proof}
\end{lemma}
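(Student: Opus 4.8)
The plan is to convert the whole system of inequalities into a single condition on the partial sums $S_i := \sum_{j=1}^i w_j$, and then to produce a valid rational sequence $0 = S_0 < S_1 < \cdots < S_{n-1} < S_n = 1$ by a sign analysis rather than a direct search. Before doing so I would record two preliminary facts. Because the quotient $\mathcal{E}$ is locally free, the restriction of the defining sequence to $C_j$ stays left exact (exactly as in Lemma \ref{lemma about generated pair and ontoness}(a)), so each $\mathcal{M}|_{C_j}$ is a rank-$m$ subbundle of the trivial bundle $\mathcal{V} \otimes \mathcal{O}_{C_j}$; since the trivial bundle is semistable of slope $0$ and $g_j \geq 2$, this forces $\deg(\mathcal{M}|_{C_j}) \leq 0$ and hence $\chi_j = \deg(\mathcal{M}|_{C_j}) + m(1-g_j) < 0$ for every $j$. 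Second, tensoring the canonical sequence (\ref{canonical exact sequence}) with $\mathcal{M}$ gives $\chi = \sum_{j=1}^n \chi_j - (n-1)m$.

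Next I would rewrite the $i$-th pair of inequalities. Transposing the $\sum_{j=1}^{i-1}\chi_j$ and the $m$-terms and then dividing by $\chi < 0$ (which reverses both inequalities), the $i$-th condition becomes precisely $a_i \leq S_i \leq b_i$, where $a_i := \frac{\sum_{j=1}^i \chi_j - m(i-1)}{\chi}$ and $b_i := \frac{\sum_{j=1}^i \chi_j - mi}{\chi}$. A one-line computation gives $b_i - a_i = \frac{m}{-\chi} > 0$, so each target interval $[a_i,b_i]$ is nonempty, and since $\chi, \chi_j, m \in \mathbb{Z}$ its endpoints are rational. Thus the lemma is equivalent to threading a strictly increasing rational sequence with $S_0 = 0$ and $S_n = 1$ through the boxes $S_i \in [a_i,b_i]$, and this threading is the step I expect to carry the weight.

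The apparent difficulty — choosing the $S_i$ increasing while staying inside each interval and still reaching $1$ — dissolves once I check the right ordering inequality, which is where I would focus. Using only $\chi < 0$ and $\chi_j < 0$, I would verify that $a_i > 0$ and $b_i < 1$ (the latter amounts to $m(n-1-i) > \sum_{j=i+1}^n \chi_j$, true since the left side is $\geq 0$ and the right side is $< 0$), so that $[a_i,b_i] \subset (0,1)$; and, crucially, that $a_{i+1} - b_i = \frac{\chi_{i+1}}{\chi} > 0$, i.e.\ $b_i < a_{i+1}$. This last relation says the boxes are pairwise disjoint and strictly ordered from left to right as $i$ grows. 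Consequently I may pick \emph{any} rational $S_i \in [a_i,b_i]$ for $i = 1,\dots,n-1$ and set $S_0 = 0$, $S_n = 1$: the chain $S_i \leq b_i < a_{i+1} \leq S_{i+1}$ forces strict monotonicity, while $0 < a_1 \leq S_1$ and $S_{n-1} \leq b_{n-1} < 1$ handle the two endpoints. Defining $w_i := S_i - S_{i-1}$ then gives $w_i > 0$ for all $i$ with $\sum_{j=1}^n w_j = S_n = 1$, hence a genuine polarization, and each $S_i \in [a_i,b_i]$ is exactly the reformulated conclusion. Isolating the inequality $b_i < a_{i+1}$ this way removes the need for the author's inductive construction of the $w_{i+1}$.
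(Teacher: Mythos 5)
Your proof is correct and is essentially the paper's argument in a cleaner wrapper: dividing by $\chi<0$ to obtain the target intervals $[a_i,b_i]$ for the partial sums, checking that their endpoints lie in $(0,1)$, and the key positivity $a_{i+1}-b_i=\frac{\chi_{i+1}}{\chi}>0$ are exactly the computations the paper carries out inductively to show each $w_{i+1}>0$ (and $w_n>0$). The only difference is presentational --- you thread all the intervals at once instead of constructing the $w_i$ one at a time.
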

\begin{remark}
 The above lemma applies, in particular, to $\mathcal{V} \otimes \mathcal{O}_C$ itself. More precisely, any trivial bundle is always a proper subbundle of a higher rank trivial bundle. Also Euler characteristic of any trivial bundle over $C$ is some positive integer multiple of $(1-p_a(C))$, and so, is negative ($p_a(C)$ is at least $4$ because $C$ has at least $2$ components and $g_j \geq 2$ for each $j$). Therefore, from the above lemma, we can conclude that given any trivial bundle $\mathcal{V} \otimes \mathcal{O}_C$, there always exists a polarization $w$ such that $\mathcal{V} \otimes \mathcal{O}_C$ is $w$-semistable. This will imply that if $\mathcal{M}$ is any proper subsheaf of $\mathcal{V} \otimes \mathcal{O}_C$ then $\chi(M) < 0$. So the assumption $\chi(\mathcal{M}) < 0$ in the statement of the lemma is redundant. But we have kept it in the lemma as we have not proved the $w$-semistability of $\mathcal{V} \otimes \mathcal{O}_C$ separately.
\end{remark}
Now, suppose $(E,V)$ is a generated pair on $C$. Then $M_{E,V}$ is a subbundle of the trivial bundle $V \otimes \mathcal{O}_C$. By the defining exact sequence (\ref{defining sequence for the kernel bundle}) of $M_{E,V}$ we have
\[
\begin{split}
\chi(M_{E, V}) & = \chi (V \otimes \mathcal{O}_C) - \chi(E) \\
                        & = k (1 - p_a(C)) - d - r(1 - p_a(C)) \\
                        & = (k- r)(1 - p_a(C)) -d,
\end{split}
\]
where $d = \sum\limits_{i=1}^n d_i$. This, in particular, means that $\chi(M_{E,V}) < 0$. So, as a particular case of Lemma \ref{lemma on existence of polarization}, we have the following lemma:
\begin{lemma}\label{semistability related lemma}
 Let $C$ be a chain-like curve. Suppose $(E,V)$ is a generated pair on $C$.
 Then there exists a polarization $w = (w_1,\dots,w_n)$ such that 
 \begin{equation*}
  (\sum_{j=1}^iw_j)\chi - \sum_{j=1}^{i-1}\chi_j + (k-r)(i-1) \leq
  \chi_i \leq (\sum_{j=1}^iw_j)\chi - \sum_{j=1}^{i-1}\chi_j + (k-r)i,
 \end{equation*} 
 where $\chi := \chi(M_{E,V})$, $\chi_j := \chi({M_{E,V}}_{\vert_{C_j}})$, $i$ varies from $1$ to $n-1$ and $(k-r)$ is the rank of $M_{E,V} =$
 rank of ${M_{E,V}}_{\vert_{C_j}}$.

\end{lemma}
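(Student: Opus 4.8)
The plan is to recognize this statement as nothing more than the specialization of Lemma \ref{lemma on existence of polarization} to the case where the ambient trivial bundle is $V \otimes \mathcal{O}_C$ and the subbundle is the kernel bundle $M_{E,V}$. Concretely, I would take $\mathcal{V} = V$ and $\mathcal{M} = M_{E,V}$, and use the defining exact sequence (\ref{defining sequence for the kernel bundle}) to exhibit $M_{E,V}$ as a subbundle of $V \otimes \mathcal{O}_C$ with quotient $E$.

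With this identification in place, the proof reduces to verifying that the four hypotheses of Lemma \ref{lemma on existence of polarization} hold for the pair $(V, M_{E,V})$. First, $V$ is a nonzero finite dimensional vector space, since $(E,V)$ is a generated pair with $\dim V = k > r \geq 1$. Second, the quotient $\mathcal{E} = (V \otimes \mathcal{O}_C)/M_{E,V} \cong E$ is locally free because $E$ is a vector bundle. Third, from the rank count in (\ref{defining sequence for the kernel bundle}) we have $\operatorname{rk}(M_{E,V}) = k - r$, so we set $m = k - r$. Fourth, one needs $\chi(M_{E,V}) < 0$; but this is exactly the computation displayed immediately before the statement, which gives $\chi(M_{E,V}) = (k-r)(1 - p_a(C)) - d$, and this is negative because $k - r > 0$, $d \geq 0$, and $p_a(C) \geq 4 > 1$ so that $1 - p_a(C) < 0$.

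Once these hypotheses are confirmed, I would simply invoke the conclusion of Lemma \ref{lemma on existence of polarization}: there exists a polarization $w = (w_1,\dots,w_n)$ satisfying the stated chain of inequalities, where one substitutes $m = k-r$, $\chi = \chi(M_{E,V})$, and $\chi_j = \chi({M_{E,V}}_{\vert_{C_j}})$. This is precisely the asserted inequality, completing the argument. I expect there to be no genuine obstacle here: all the combinatorial work on the existence of the $w_i$ has already been carried out in Lemma \ref{lemma on existence of polarization}, and the only point requiring a moment of attention is the sign of $\chi(M_{E,V})$, which is already handled by the pre-computed formula and the lower bound $p_a(C) \geq 4$.
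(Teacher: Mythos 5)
Your proposal is correct and matches the paper's argument exactly: the paper derives this lemma as a particular case of Lemma \ref{lemma on existence of polarization} applied to $\mathcal{M} = M_{E,V} \subset V \otimes \mathcal{O}_C$ with quotient $E$, after noting $\chi(M_{E,V}) = (k-r)(1-p_a(C)) - d < 0$ via the same computation you cite. Your explicit verification of the hypotheses (local freeness of the quotient, $m = k-r$, sign of $\chi$) is slightly more detailed than the paper's, but the route is identical.
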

\begin{theorem}\label{general result on stability}
 Let $C$ be a chain-like curve and $\mathcal{M}$ be as in Lemma  \ref{lemma on existence of polarization}. Suppose $\mathcal{M}_{|_{C_j}}$ is semistable for each $j$. Then there exists a polarization $w$ such that $\mathcal{M}$ is $w$-semistable. Further, suppose $\mathcal{M}_{|_{C_j}}$ is stable for some $j$. Then $\mathcal{M}$ is $w$-stable.
 \begin{proof}
  By Lemma \ref{lemma on existence of polarization}, there exists a polarization $w = (w_1,\dots,w_n)$ such that 
 \begin{equation*}
  (\sum_{j=1}^iw_j)\chi - \sum_{j=1}^{i-1}\chi_j + m(i-1) \leq
  \chi_i \leq (\sum_{j=1}^iw_j)\chi - \sum_{j=1}^{i-1}\chi_j + mi,
 \end{equation*} 
 where $\chi := \chi(\mathcal{M})$, $\chi_j := \chi(\mathcal{M}_{\vert_{C_j}})$ and $i$ varies from $1$ to $n-1$. 
 Since ${\mathcal{M}}_{|_{C_j}}$ is semistable for each $j$ and the polarization $w$ satisfies the above set of inequalities, by Theorem \ref{theorem of Bigas}, the result follows.
 \end{proof}
\end{theorem}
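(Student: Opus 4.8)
The plan is to read this theorem as the exact junction of the two results that precede it: the existence statement of Lemma~\ref{lemma on existence of polarization}, which manufactures a polarization out of the numerical data of $\mathcal{M}$, and Bigas's criterion Theorem~\ref{theorem of Bigas}(ii), which converts fibrewise semistability together with the numerical inequalities into global $w$-(semi)stability. The entire task is to check that the polarization produced by the former is admissible as input to the latter.

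First I would apply Lemma~\ref{lemma on existence of polarization} directly. Its hypotheses --- that $\mathcal{M}$ is a subbundle of a trivial bundle $\mathcal{V}\otimes\mathcal{O}_C$ with locally free quotient $\mathcal{E}$ and with $\chi(\mathcal{M})<0$ --- are precisely the standing assumptions on $\mathcal{M}$ inherited from that lemma. This yields a polarization $w=(w_1,\dots,w_n)$ for which, writing $\chi=\chi(\mathcal{M})$, $\chi_j=\chi(\mathcal{M}_{|_{C_j}})$ and $m=\text{rk}(\mathcal{M})$, the chain of inequalities
\[
\Big(\sum_{j=1}^i w_j\Big)\chi-\sum_{j=1}^{i-1}\chi_j+m(i-1)\ \le\ \chi_i\ \le\ \Big(\sum_{j=1}^i w_j\Big)\chi-\sum_{j=1}^{i-1}\chi_j+mi
\]
holds for $i=1,\dots,n-1$. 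These are verbatim the inequalities (\ref{polarization inequalities}) appearing in the hypothesis of Theorem~\ref{theorem of Bigas}(ii), with the rank $r$ there playing the role of $m$.

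Second, with this $w$ in hand, I would invoke Theorem~\ref{theorem of Bigas}(ii). Each restriction $\mathcal{M}_{|_{C_j}}$ is semistable by assumption, and the polarization $w$ makes the Euler characteristics satisfy (\ref{polarization inequalities}); the criterion then returns that $\mathcal{M}$ is $w$-semistable. For the final assertion, Theorem~\ref{theorem of Bigas}(ii) carries a built-in strengthening: if in addition one of the $\mathcal{M}_{|_{C_j}}$ is stable, then $\mathcal{M}$ is $w$-stable. Since this is exactly the supplementary hypothesis, the $w$-stability follows without further argument.

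The point where genuine content hides is not in this theorem but in the step --- already absorbed into Lemma~\ref{lemma on existence of polarization} --- that each $\mathcal{M}_{|_{C_j}}$ is again a subbundle of $\mathcal{V}\otimes\mathcal{O}_{C_j}$ (so that $\chi_j<0$ for every $j$), which is what forces the ratios $\chi_j/\chi$ and their partial-sum analogues into the open interval $(0,1)$ and thereby makes the polarization choosable. That in turn rests on the local freeness of the quotient $\mathcal{E}$ together with the vanishing of the relevant $\mathrm{Tor}^1$, exactly as in Lemma~\ref{lemma about generated pair and ontoness}(a). Since we are entitled to assume Lemma~\ref{lemma on existence of polarization}, I expect no real obstacle at the level of this theorem: the only thing to guard against is a mismatch between the two inequality systems, so I would simply verify that the roles of $m$ and $r$ and the index range $i=1,\dots,n-1$ line up, which they do.
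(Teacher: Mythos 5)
Your proposal is correct and follows exactly the paper's own argument: invoke Lemma \ref{lemma on existence of polarization} to produce a polarization $w$ satisfying the inequalities (\ref{polarization inequalities}) with $m$ in place of $r$, then apply Theorem \ref{theorem of Bigas}(ii) to the componentwise (semi)stable restrictions. There is no substantive difference from the paper's proof.
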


\begin{corollary}\label{stability of the kernel bundle}
 Let $C$ be a chain-like curve and $(E,V)$ be a generated pair on $C$.
 If for each $j$, ${M_{E,V}}_{\vert_{C_j}}$ is semistable, then there exists a polarization $w$ such that $M_{E,V}$ is $w$-semistable. Further, if for some $j$, ${M_{E,V}}_{\vert_{C_j}}$ is stable, then $M_{E,V}$ is $w$-stable.
 \begin{proof}
  Follows directly from Lemma \ref{semistability related lemma} and Theorem \ref{general result on stability}.
 \end{proof}
\end{corollary}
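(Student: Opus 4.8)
The plan is to recognize that the kernel bundle $M_{E,V}$ is precisely an instance of the subbundle $\mathcal{M}$ treated in Theorem \ref{general result on stability}, so that the corollary reduces to verifying the structural hypotheses and then quoting that theorem. Taking $\mathcal{V} = V$, the defining exact sequence (\ref{defining sequence for the kernel bundle}) exhibits $M_{E,V}$ as a subbundle of the trivial bundle $V \otimes \mathcal{O}_C$ whose quotient is $E$.

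First I would confirm the three hypotheses required by Lemma \ref{lemma on existence of polarization} (and hence by Theorem \ref{general result on stability}). The rank of $M_{E,V}$ is $k-r$, playing the role of $m$. The quotient $(V \otimes \mathcal{O}_C)/M_{E,V}$ equals $E$, which is locally free because $E$ is a vector bundle; this is the one point needing an explicit word, though it is immediate. Finally, the computation carried out just before Lemma \ref{semistability related lemma} gives $\chi(M_{E,V}) = (k-r)(1-p_a(C)) - d$, which is negative since $p_a(C) \geq 4 > 1$ and $d \geq 0$; thus $\chi(M_{E,V}) < 0$.

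With these hypotheses in hand, Lemma \ref{semistability related lemma} --- itself the specialization of Lemma \ref{lemma on existence of polarization} to the kernel bundle --- produces a polarization $w = (w_1,\dots,w_n)$ for which the Euler characteristics $\chi(M_{E,V})$ and $\chi({M_{E,V}}_{\vert_{C_j}})$ satisfy the Bigas inequalities (\ref{polarization inequalities}). Since by assumption each restriction ${M_{E,V}}_{\vert_{C_j}}$ is semistable, Theorem \ref{general result on stability} then yields $w$-semistability of $M_{E,V}$. For the sharper statement, if some ${M_{E,V}}_{\vert_{C_j}}$ is stable, the stability clause of that theorem (which traces back to part (ii) of Theorem \ref{theorem of Bigas}) upgrades the conclusion to $w$-stability.

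The main point to watch is not an obstacle so much as a bookkeeping check: all of the genuine work is already done in Theorem \ref{general result on stability} and in Lemma \ref{lemma on existence of polarization}, which in turn rest on Bigas's Theorem \ref{theorem of Bigas}. The corollary therefore only requires matching $M_{E,V}$ to the general setup, the sole substantive verification being that its quotient is locally free, which holds trivially because that quotient is the vector bundle $E$.
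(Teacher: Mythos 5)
Your proposal is correct and follows exactly the paper's route: it identifies $M_{E,V}$ as an instance of the $\mathcal{M}$ in Theorem \ref{general result on stability} via the defining sequence (\ref{defining sequence for the kernel bundle}), checks that the quotient $E$ is locally free and that $\chi(M_{E,V})=(k-r)(1-p_a(C))-d<0$ (the computation the paper records just before Lemma \ref{semistability related lemma}), and then invokes that lemma together with the theorem. The paper's own proof is simply the one-line citation of these two results, so your write-up is the same argument with the routine verifications made explicit.
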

\begin{remark}
Suppose $\text{Ker}(\rho_{j_{|_V}}) = 0$ for each $j$. 
 If $M_{E_j,V_j}$ is semistable for each $j$, then there exists a polarization $w$ such that $M_{E,V}$ is $w$-semistable. This follows from corollary \ref{restrictions of the kernel bundle are also kernel bundles} which coincides with \cite[Theorem 3.2]{Brivio-Favale2020} for $n=2$.
\end{remark}


\begin{example} \label{Example_where_ker(rho_j_V)_is_nonzero}
  Suppose $C$ is a general nodal curve with two smooth components $C_1$ of genus $g_1 \geq 2$ and $C_2$ of genus $g_2 \geq 2$ intersecting at a node $p$. Since $C$ is general, both of its components are general in their corresponding moduli spaces. Let $r$ be a fixed positive integer, $m=1,~ d_1 = 2rg_1~\text{and}~d_2 = 2rg_2$. Then since $g_j \geq 2~\text{and}~d_j = 2rg_j$ for each $j$, and $m=1$, we can conclude that $d_j \geq rg_j + m$ for each $j$. So, by the proof of \cite[Theorem 3.3]{Brambila-Newstead2019}, for each $j$, there exists a generated pair $(E_j,V_j)$ on $C_j$ such that $E_j$ is a stable vector bundle on $C_j$, $\text{dim}(V_j) = r+1$ and $h^0(E_j) = d_j+r(1-g_j) = rg_j+r$.\\
  
  \noindent Now consider the canonical maps $V_j \xrightarrow{\theta_j} \frac{E_{j,p}}{\mathfrak{m}_{j,p}E_{j,p}}$, where $\mathfrak{m}_{j,p}$ is the maximal ideal in the local ring $\mathcal{O}_{C_{j,p}}$. The fact that $(E_j,V_j)$ is a generated pair for each $j$ means that $\theta_j$ is surjective. Since the vector space $V_j$ is $r+1$ dimensional and the space $\frac{E_{j,p}}{\mathfrak{m}_{j,p}E_{j,p}}$ is $r$ dimensional, the $\text{Ker}(\theta_j)$ will be one dimensional. Let $\{s_{r+1}\}~\text{and}~\{t_{r+1}\}$ be the bases for $\text{Ker}(\theta_1)$ and $\text{Ker}(\theta_2)$ respectively. Extend these to the bases $\{s_1,\dots,s_{r+1}\}$ and $\{t_1,\dots,t_{r+1}\}$ of $V_1$ and $V_2$ respectively. Let $\sigma:\frac{E_{1,p}}{\mathfrak{m}_{1,p}E_{1,p}} \rightarrow \frac{E_{2,p}}{\mathfrak{m}_{2,p}E_{2,p}}$ be the isomorphism defined by  $\bar{s_j} \mapsto \bar{t_j}$ for $j=1,\dots,r$, where $\bar{s_j}$ and $\bar{t_j}$ are images of $s_j$ and $t_j$ in $\frac{E_{1,p}}{\mathfrak{m}_{1,p}E_{1,p}}$ and $\frac{E_{2,p}}{\mathfrak{m}_{2,p}E_{2,p}}$ respectively. Now, suppose $E$ is the vector bundle on $C$ that corresponds to the triple $(E_1,E_2,[\sigma])$ (see \cite[Section 2]{nagaraj-seshadri96} for details on the correspondence between torsion free sheaves and triples). Then it is clear that $\lbrace (s_1,t_1), (s_2, t_2), \dots, (s_r, t_r), (s_{r+1},0), (0, t_{r+1})\rbrace
\subset H^0(E) $. Let the set $\lbrace (s_1,t_1), \dots, (s_r, t_r), (s_{r+1},0), (0, t_{r+1})\rbrace$ be denoted by $\mathcal{S}$ and $V = \text{Span}(\mathcal{S})$. Then $\text{dim}(V) = r+2$ and $(E,V)$ is a generated pair on $C$. Since $g_j \geq 2$ for each $j$, we can conclude that $r+2 \leq \text{min}((rg_1+r),(rg_2+r))$. So we have $r < (r+2) \leq \text{min}(h^0(E_1),h^0(E_2))$ and $(r+2)$ sections generate $E$. Therefore, by \cite[Remark 3.2.1]{Brivio-Favale2020}, a general pair $(E,W)$ on $C$ such that $\text{dim}(W) = r+2$ is generated and satisfies $W \cap H^0(E_j(-p)) = \{0\}$ for each $j$. In particular, this means that $\text{Ker}(\rho_j{_{|_W}}) = 0$ for each $j$. 
\end{example}

\begin{theorem}
Let $C$ be a general chain-like curve, and $(L, V)$ a generated pair on $C$ such that $(L_j, V_j)$ is a general linear series for each $j$. Suppose for each $j$, $\text{Ker}(\rho_{j_{|_V}}) = 0$. Then there exists a polarisation $w$ such that $M_{E, V}$ is $w$-semistable. 
\begin{proof}
Since $\text{Ker}(\rho_{j_{|_V}}) = 0$, by Corollary \ref{restrictions of the kernel bundle are also kernel bundles} we have $M_{L_j, V_j} \cong {M_{L, V}}_{\vert_{C_j}}$. From the hypothesis on the generated pair $(L_j, V_j)$, the corresponding kernel bundle $M_{L_j, V_j}$ on $C_j$ is semistable for each $j$ (\cite[Theorem 5.1]{Usha15}). Therefore, ${M_{L, V}}_{\vert_{C_j}}$ is semistable for each $j$. Therefore, by Corollary \ref{stability of the kernel bundle}, there exists a polarisation $w$ such that $M_{L, V}$ is $w$-semistable.
\end{proof}
\end{theorem}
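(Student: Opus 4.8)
The plan is to reduce the statement to the smooth-curve situation component by component, and then to feed the resulting semistability on each $C_j$ into Corollary \ref{stability of the kernel bundle}. First I would note that the hypothesis $\text{Ker}(\rho_{j_{|_V}}) = 0$ for every $j$ is exactly the condition required in Corollary \ref{restrictions of the kernel bundle are also kernel bundles}. Applying that corollary, the restriction ${M_{L,V}}_{\vert_{C_j}}$ of the kernel bundle to the smooth component $C_j$ is canonically identified with the kernel bundle $M_{L_j,V_j}$ of the restricted generated pair $(L_j,V_j)$ on $C_j$. This turns a question about a bundle on the singular curve $C$ into $n$ separate questions on smooth curves.

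Next I would establish the semistability of each $M_{L_j,V_j}$. Since $C$ is a general chain-like curve, each component $C_j$ is a general smooth curve of genus $g_j$, and by hypothesis $(L_j,V_j)$ is a general linear series on $C_j$. These are precisely the genericity assumptions under which the kernel bundle of a general generated linear series on a general curve is known to be semistable, namely \cite[Theorem 5.1]{Usha15}. Invoking that result yields the semistability of $M_{L_j,V_j}$ for each $j$, and combining this with the isomorphism ${M_{L,V}}_{\vert_{C_j}} \cong M_{L_j,V_j}$ from the first step shows that the restriction of $M_{L,V}$ to each component is semistable.

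Finally, having the restrictions ${M_{L,V}}_{\vert_{C_j}}$ semistable for all $j$, I would apply Corollary \ref{stability of the kernel bundle} directly: it guarantees the existence of a polarization $w$ on $C$ for which $M_{L,V}$ is $w$-semistable, which is the desired conclusion. (If in addition one of the $M_{L_j,V_j}$ happened to be stable, the same corollary would upgrade $w$-semistability to $w$-stability, though the present statement only asks for the former.)

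The argument is essentially a chain of previously established results, so I do not anticipate a genuine technical obstacle. The only point deserving care is verifying that the two genericity hypotheses in the statement line up correctly with the input needed by \cite[Theorem 5.1]{Usha15}: one must check that genericity of the chain-like curve $C$ forces each component $C_j$ to be general in its own moduli space, and that this is compatible with the assumed genericity of each linear series $(L_j,V_j)$. Once that compatibility is granted, the proof is immediate.
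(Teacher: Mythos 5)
Your proposal follows exactly the same chain as the paper's proof: identify ${M_{L,V}}_{\vert_{C_j}}$ with $M_{L_j,V_j}$ via Corollary \ref{restrictions of the kernel bundle are also kernel bundles}, invoke \cite[Theorem 5.1]{Usha15} for the semistability of each $M_{L_j,V_j}$, and conclude with Corollary \ref{stability of the kernel bundle}. The argument is correct and matches the paper's approach essentially verbatim.
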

\begin{theorem}
Let $C$ be a chain-like curve such that each smooth component $C_j$ of $C$ is a Petri curve. 
Let $(L, V)$ be a generated pair on $C$ with $k \geq 6$, $g_j \geqslant 2 k - 6$.  Suppose $\text{Ker}(\rho_{j_{|_V}}) = 0$ and the linear series $(L_j, V_j)$ is general for $j = 1,\dots,n$. Then $M_{L,V}$ is $w$-stable for a polarisation $w$.
\begin{proof}
First of all note that for each $j$, $M_{L_j, V_j} \cong {M_{L, V}}_{\vert_{C_j}}$ and $k = k_j$, since $\text{Ker}(\rho_{j_{|_V}}) = 0$ for  $1 \leqslant  j \leqslant n$ (see Corollary \ref{restrictions of the kernel bundle are also kernel bundles}). Thus, for each $j$, $k_j \geqslant 6$ and $g_j \geqslant 2 k - 6$. Therefore, for a general linear series $(L_j, V_j)$ satisfying these conditions, the kernel bundle $M_{L_j, V_j}$ associated to  $(L_j, V_j)$ is stable (by \cite[Theorem 6.1]{Usha15}). Therefore,  ${M_{L, V}}_{\vert_{C_j}}$ is stable, for each $j$. Corollary \ref{stability of the kernel bundle} then guarantees the existence of a polarisation $w$ for which $M_{L, V}$ is $w$-stable.
\end{proof}
\end{theorem}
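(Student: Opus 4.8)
The plan is to decouple the problem on the singular curve $C$ into $n$ problems on its smooth components and then reassemble using the Bigas-type criterion. The guiding principle is that the hypothesis $\text{Ker}(\rho_{j_{|_V}}) = 0$ should make the restriction of $M_{L,V}$ to each $C_j$ coincide with a kernel bundle intrinsic to $C_j$, so that the known theory of kernel bundles on smooth curves becomes directly applicable.

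First I would record the consequences of the vanishing $\text{Ker}(\rho_{j_{|_V}}) = 0$. By Corollary \ref{restrictions of the kernel bundle are also kernel bundles}, this condition is equivalent to the restriction map $\rho_{j_{|_V}} \colon V \to V_j$ being an isomorphism, whence $k_j := \dim V_j = k$ for each $j$, and it yields a canonical identification $M_{L,V}|_{C_j} \cong M_{L_j, V_j}$. This is the heart of the reduction: it transports the question from $C$ to the smooth curves $C_j$ at no numerical cost.

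Next I would check that the numerical and genericity hypotheses descend correctly. Because $k_j = k$, the assumptions $k \geq 6$ and $g_j \geq 2k - 6$ are exactly $k_j \geq 6$ and $g_j \geq 2k_j - 6$; together with the Petri hypothesis on $C_j$ and the generality of the linear series $(L_j, V_j)$, these are precisely the hypotheses of \cite[Theorem 6.1]{Usha15}. Invoking that theorem componentwise gives that each $M_{L_j, V_j}$, and hence each restriction $M_{L,V}|_{C_j}$, is stable.

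Finally I would assemble the global conclusion through Corollary \ref{stability of the kernel bundle}. Since every restriction $M_{L,V}|_{C_j}$ is stable --- in particular semistable for all $j$ and stable for at least one $j$ --- the corollary produces a polarisation $w$ making $M_{L,V}$ $w$-stable. I do not expect a genuine obstacle here: the only delicate point is the bookkeeping in the first step, namely verifying that $\text{Ker}(\rho_{j_{|_V}}) = 0$ both forces $k_j = k$ (so that the genus bound survives in the form $g_j \geq 2k_j - 6$ required by the smooth-curve theorem) and identifies the restriction with a bona fide kernel bundle. Once this identification is in place, the argument is a formal combination of \cite[Theorem 6.1]{Usha15} with the reassembly criterion of Corollary \ref{stability of the kernel bundle}.
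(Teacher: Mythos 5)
Your proposal is correct and follows essentially the same route as the paper: identify $M_{L,V}|_{C_j}\cong M_{L_j,V_j}$ with $k_j=k$ via Corollary \ref{restrictions of the kernel bundle are also kernel bundles}, apply \cite[Theorem 6.1]{Usha15} componentwise, and reassemble with Corollary \ref{stability of the kernel bundle}. No substantive differences to report.
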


\section{strongly unstable kernel bundles}
In the previous section, we have seen that if the restriction of $M_{E,V}$ to each component is semistable, there exists a polarization $w$ such that $M_{E,V}$ is $w$-semistable. In this section, we provide some sufficient conditions under which $M_{E,V}$ becomes strongly unstable. By Corollary \ref{stability of the kernel bundle}, for $M_{E,V}$ to be strongly unstable, there should exist at least one $j$ such that $M_{E,V} \otimes \mathcal{O}_{C_j}$ is not semistable. Lemma \ref{lemma restriction of kernel bundle is not semi-stable}(d) says that this can be achieved if $V \cap H^0(E_j(-p_{j-1} -p_j)) \neq 0$. So to obtain sufficient conditions for $M_{E,V}$ to be strongly unstable, it is reasonable to assume the hypothesis that $V \cap H^0(E_j(-p_{j-1} -p_j)) \neq 0$, or, more generally that $\text{Ker}(\rho_{j_{|_V}}) \neq 0$ (for at least one $j$). 
\begin{theorem}\label{a theorem on strongly unstable kernel bundles}
 Let $(E,V)$ be a generated pair on a chain-like curve $C$. Then $M_{E,V}$ is strongly unstable, if any of the following conditions hold:
 \begin{enumerate}
     \item[(a)] $V \cap H^0(E_1(-p_{1})) \neq 0$ (or $V \cap H^0(E_n(-p_{n-1})) \neq 0$), $E_1$ (resp. $E_n$) is semistable and $(k-r) < d_1$ (resp. $(k-r) < d_n$).
     \item[(b)] there exists a $j \in \{2,\dots,n-1\}$ such that $V \cap H^0(E_j(-p_{j-1}-p_j)) \neq 0$, $E_j$ is semistable and $(k-r) < \frac{d_j}{2}$.
 \end{enumerate}
 \begin{proof}
 Suppose $M_{E,V}$ is $w$-semistable with respect to a polarization $w$.\\
  \noindent (a) Given $(k-r) < d_1$. By Theorem \ref{theorem of Bigas} (i), the polarization $w$ satisfies the inequalities (\ref{polarization inequalities}). In particular, by using the facts that $\chi_1 = -(k-r)((g_1-1)+d_1)$, $\chi = -(k-r)((p_a(C)-1)+d)$ and the inequality (\ref{third inequality for w1}), we have
  \begin{equation}\label{right side polarization inequality for w1}
    w_1 \geq \frac{(k-r)(g_1-1)+d_1}{(k-r)(p_a(C)-1)+d}.  
  \end{equation}
  On the other hand, since $\text{Ker}(\rho_{1_{|_V}}) \neq 0$, from the commutative diagram in Lemma \ref{lemma restriction of kernel bundle is not semi-stable}(b), we can conclude that $\text{Ker}(\rho_{1_{|_V}}) \otimes \mathcal{O}_{C_1}(-p_1) \subseteq M_{E,V} \otimes \mathcal{O}_{C_1}(-p_1) \subseteq M_{E,V}$ is a non-trivial subsheaf of $M_{E,V}$. So the fact that $M_{E,V}$ is $w$-semistable will imply
  \begin{equation}\label{slope inequality for restriction of the kernel (1)}
 \mu_w ({\rm Ker}(\rho_{1_{|_V}}) \otimes \mathcal{O}_{C_1}(-p_1)) \leq  \mu_w(M_{E,V}).
 \end{equation}
 Denoting $\text{dim}(\text{Ker})(\rho_{1_{|_V}})$ by $t_1$, we have 
 \begin{eqnarray}\label{slope of ker_rho1}
  \mu_w ({\rm Ker}(\rho_{1_{|_V}}) \otimes \mathcal{O}_{C_1}(-p_1)) & = & \frac{t_1\chi(\mathcal{O}_{C_1}(-p_1))}{w_1t_1}  \nonumber \\
  & = & \frac{-g_1}{w_1}.
 \end{eqnarray}
 Also 
 \begin{eqnarray}\label{slope of M{E,V}}
  \mu_w(M_{E,V}) & = & \frac{(k-r)(1-p_a(C))-d}{(k-r)}. 
 \end{eqnarray}
 Using equations (\ref{slope of ker_rho1}) and (\ref{slope of M{E,V}}) in the inequality (\ref{slope inequality for restriction of the kernel (1)}) and simplifying, we get
 \begin{equation}\label{left inequality for w1}
  w_1 \leq \frac{(k-r)g_1}{(k-r)(p_a(C)-1)+d}.     
 \end{equation}
 From the inequalities (\ref{right side polarization inequality for w1}) and (\ref{left inequality for w1}), we get 
 $(k-r)-d_1 \geq 0$ which is a contradiction to the hypothesis that $(k-r) < d_1$. \\
 
 \noindent (b) Given $(k-r) < \frac{d_j}{2}$ for some $j \in \{2,\dots,n-1\}$. Again proceeding as before, from the inequality (\ref{fourth inequlity for w_(i+1)}), we have
 \begin{eqnarray}
  w_{j} & \geq & 
   \frac{(k-r)(\sum\limits_{i=1}^{j}g_i-1)+\sum\limits_{i=1}^{j}d_i}{(k-r)(p_a(C)-1)+d} - \sum\limits_{i=1}^{j-1}w_i \nonumber \\
   & \geq & \frac{(k-r)(\sum\limits_{i=1}^{j}g_i-1)+\sum\limits_{i=1}^{j}d_i}{(k-r)(p_a(C)-1)+d} - \frac{(k-r)(\sum\limits_{i=1}^{j-1}g_i)+\sum\limits_{i=1}^{j-1}d_i}{(k-r)(p_a(C)-1)+d} \nonumber \\
   & = & \frac{(k-r)(g_j-1)+d_j}{(k-r)(p_a(C)-1)+d}. \nonumber
 \end{eqnarray}
 Since $\text{Ker}(\rho_{j_{|_V}}) \otimes \mathcal{O}_{C_j}(-p_{j-1}-p_j) \subseteq M_{E,V} \otimes \mathcal{O}_{C_j}(-p_{j-1}-p_j) \subseteq M_{E,V}$, we have
 \begin{eqnarray}
  w_j & \leq & \frac{(k-r)(g_j+1)}{(k-r)(p_a(C)-1)+d}.
 \end{eqnarray}
 Comparing both the above inequalities involving $w_j$, we get
 $2(k-r)-d_j \geq 0$, which is a contradiction. This completes the proof.
 \end{proof}
\end{theorem}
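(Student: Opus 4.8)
The plan is to argue by contradiction. Suppose there is a polarization $w=(w_1,\dots,w_n)$ for which $M_{E,V}$ is $w$-semistable, and aim to trap the single weight attached to the relevant component between two incompatible bounds. The numerology I will need is the rank $\text{rk}(M_{E,V})=k-r$ together with the Riemann--Roch values $\chi:=\chi(M_{E,V})=-\big[(k-r)(p_a(C)-1)+d\big]$ and $\chi_j:=\chi(M_{E,V}|_{C_j})=-\big[(k-r)(g_j-1)+d_j\big]$, both negative; these follow from the defining sequence (\ref{defining sequence for the kernel bundle}) and from $\deg(M_{E,V}|_{C_j})=-d_j$, as computed in Lemma \ref{lemma restriction of kernel bundle is not semi-stable}(d).

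For the lower bound I would feed $w$-semistability into Theorem \ref{theorem of Bigas}(i): the Euler-characteristic inequalities (\ref{polarization inequalities}) must hold, with $r$ there replaced by the present rank $k-r$. In case (a) the left inequality at $i=1$ reads $w_1\chi\le\chi_1$, so dividing by $\chi<0$ gives $w_1\ge\chi_1/\chi$. In case (b) I would obtain the analogue $w_j\ge\chi_j/\chi$ by telescoping: subtracting the upper estimate for $\sum_{l=1}^{j-1}w_l$ from the lower estimate for $\sum_{l=1}^{j}w_l$ in (\ref{polarization inequalities}) collapses, exactly as in the inductive step (\ref{fourth inequlity for w_(i+1)}) of Lemma \ref{lemma on existence of polarization}, to $w_j\ge\chi_j/\chi$. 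Substituting the Riemann--Roch values rewrites each as an explicit lower bound whose numerator is $(k-r)(g_j-1)+d_j$ and whose denominator is $-\chi$.

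The heart of the argument is the matching upper bound, produced by an explicit destabilizing subsheaf concentrated on the single component $C_j$. Using the hypothesis $V\cap H^0(E_j(-p_{j-1}-p_j))\ne0$ (with the boundary convention that $p_0,p_n$ are absent) I would extract a nonzero trivial subbundle of the restriction $M_{E,V}|_{C_j}$ (the $\text{Ker}(\rho_{j_{|_V}})$ piece in the diagram of Lemma \ref{lemma restriction of kernel bundle is not semi-stable}(b)); semistability of $E_j$ ensures, by the argument of part (c) of that lemma, that $d_j\ge r>0$, so $M_{E,V}|_{C_j}$ has strictly negative slope and this trivial subbundle is genuinely destabilizing on $C_j$. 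Twisting it by the node(s) and using the inclusion $\mathcal{O}_{C_j}(-p_{j-1}-p_j)\hookrightarrow\mathcal{O}_C$ (extension by zero through the nodes, then $\otimes\,M_{E,V}$, which is exact since $M_{E,V}$ is locally free) realizes it as a nonzero subsheaf $N\subset M_{E,V}$ supported on $C_j$. Its multirank is concentrated in the $j$-th slot, so $\mu_w(N)$ depends only on $w_j$; a boundary component carries one node, giving $\mu_w(N)=-g_j/w_j$, while an interior component carries two, giving $\mu_w(N)=-(g_j+1)/w_j$. The semistability inequality $\mu_w(N)\le\mu_w(M_{E,V})=\chi/(k-r)$ then rearranges into an upper bound on $w_j$ with the same denominator $-\chi$ and numerator $(k-r)g_j$ (boundary) or $(k-r)(g_j+1)$ (interior).

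Finally I would compare the two bounds. Compatibility forces $(k-r)(g_j-1)+d_j\le(k-r)g_j$ in the boundary case, that is $d_j\le k-r$, and $(k-r)(g_j-1)+d_j\le(k-r)(g_j+1)$ in the interior case, that is $d_j\le 2(k-r)$; each contradicts the corresponding hypothesis $(k-r)<d_j$ or $(k-r)<d_j/2$, completing the proof. I expect the decisive difficulty to lie in the third step: pinning down a destabilizing subsheaf that lives on a single component, checking that after the node-twist it injects into $M_{E,V}$ itself rather than merely into the restriction, and keeping careful track of which component's restriction the hypothesis actually destabilizes. This single-component localization is precisely what makes $\mu_w(N)$ a function of $w_j$ alone, and the one-node-versus-two-node distinction there is exactly what produces the factor of $2$ separating the thresholds in (a) and (b); the telescoping behind the lower bound in (b) is routine by comparison.
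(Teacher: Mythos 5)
Your proposal is correct and follows essentially the same route as the paper's own proof: the lower bound on $w_j$ from Theorem \ref{theorem of Bigas}(i) (with telescoping in case (b)), the upper bound from the $w$-semistability inequality applied to the subsheaf $\text{Ker}(\rho_{j_{|_V}})\otimes\mathcal{O}_{C_j}(-p_{j-1}-p_j)\subseteq M_{E,V}$, and the final comparison yielding $d_j\le k-r$ (boundary) or $d_j\le 2(k-r)$ (interior). The slope computations and the one-node/two-node dichotomy match the paper exactly.
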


We now give some basic results which will be used in proving some of the following results of this section. We first mention the following result of Xiao:
\begin{theorem}\label{theorem directly from the paper} 
Let $E$ be a semistable vector bundle of rank $r$ on a smooth irreducible complex projective curve $C$ of genus $g \geq 2$. If $0 \leq \mu(E) \leq 2g -2$, then 
\[
h^0(E) \leq \deg(E)/2 + r.
\]
\begin{proof}
 \cite[Theorem 2.1]{Newstead-Brambila1997}
\end{proof}
\end{theorem}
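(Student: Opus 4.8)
The plan is to recast the inequality in a self-dual form, reduce to the case of a stable bundle by a Jordan--Hölder argument, and then treat the stable case by a Hopf/base-point-free-pencil type analysis of the canonical multiplication map. First I would rewrite the target. By Riemann--Roch, $h^0(E)-h^1(E)=\deg E-r(g-1)$, and by Serre duality $h^1(E)=h^0(K_C\otimes E^{\vee})$. A direct manipulation shows that the desired bound $h^0(E)\le \tfrac12\deg E+r$ is equivalent to
\[
h^0(E)+h^1(E)\le r(g+1).
\]
This form is visibly self-dual: the bundle $K_C\otimes E^{\vee}$ is again semistable (the dual of a semistable bundle is semistable, and twisting by a line bundle preserves semistability), of rank $r$ and slope $(2g-2)-\mu(E)$, which again lies in $[0,2g-2]$, while $h^0$ and $h^1$ merely swap. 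The two extreme cases are then immediate from the slope hypothesis: if $h^1(E)=0$ then $h^0(E)=\deg E-r(g-1)\le r(2g-2)-r(g-1)=r(g-1)$, and symmetrically, using $\deg E\ge 0$, if $h^0(E)=0$ then $h^1(E)\le r(g-1)$; in both situations the bound holds with room to spare. So the substance is the case $h^0(E),h^1(E)>0$.

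Next I would reduce to a stable bundle. Choose a Jordan--Hölder filtration $0=E_0\subset E_1\subset\cdots\subset E_\ell=E$ whose successive quotients $G_i:=E_i/E_{i-1}$ are stable of the common slope $\mu(E)\in[0,2g-2]$. From the long exact cohomology sequences attached to $0\to E_{i-1}\to E_i\to G_i\to 0$, both $h^0$ and $h^1$ are subadditive, so
\[
h^0(E)+h^1(E)\le\sum_{i=1}^{\ell}\bigl(h^0(G_i)+h^1(G_i)\bigr).
\]
Since $\operatorname{rk}E=\sum_i\operatorname{rk}G_i$ and the bound $r(g+1)$ is additive in the rank, it suffices to prove $h^0(G_i)+h^1(G_i)\le(\operatorname{rk}G_i)(g+1)$ for each stable factor. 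The crucial point is that all the $G_i$ share the slope $\mu(E)$, so every factor stays inside the Clifford range $[0,2g-2]$; thus the whole statement follows once it is known for \emph{stable} bundles in this range.

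For the remaining stable case I would use the Serre-duality pairing
\[
H^0(E)\otimes H^0(K_C\otimes E^{\vee})\longrightarrow H^0(K_C)
\]
induced by the trace $E\otimes E^{\vee}\to\mathcal O_C$, and analyse its degeneracy exactly as in the base-point-free pencil trick proof of classical Clifford's theorem, which is precisely the $r=1$ case. A nonzero $\phi\in H^0(K_C\otimes E^{\vee})=\operatorname{Hom}(E,K_C)$ annihilating a section $s\in H^0(E)$ forces $s$ to lie in $\ker\phi$, a rank-$(r-1)$ subsheaf whose degree is strictly constrained by stability of $E$; it is this control of the degeneracy loci that converts the slope hypothesis into the factor $\tfrac12$. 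I expect \textbf{this} to be the main obstacle. The naive rank-$r$ analogue of the Hopf lemma fails, because the image of the pairing lands in the single $g$-dimensional space $H^0(K_C)$ rather than in $r$ independent copies; and a naive induction on the rank is equally blocked, since peeling off a sub- or quotient line bundle pushes the complementary factor out of the semistable range. Making stability perform this task precisely is the technical heart of the statement, and is the content of the classical Clifford theorem for semistable bundles recorded in \cite[Theorem 2.1]{Newstead-Brambila1997}, which I would invoke to close the stable case.
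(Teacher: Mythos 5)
Your proposal is correct and, at bottom, takes the same route as the paper: the paper's entire proof is the citation of \cite[Theorem 2.1]{Newstead-Brambila1997}, and your argument also closes by invoking that same theorem for its essential content. Your preliminary steps (the equivalence with $h^0(E)+h^1(E)\le r(g+1)$, the disposal of the cases $h^0=0$ or $h^1=0$, and the Jordan--H\"older reduction to stable bundles of the same slope) are all valid, but they are redundant here, since the cited theorem is already stated and proved for semistable bundles in the range $0\le\mu(E)\le 2g-2$; you correctly identify that the genuinely hard step is the Clifford bound for (semi)stable bundles itself, which neither you nor the paper reprove.
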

\begin{remark}
 \begin{enumerate}
     \item Suppose $M_{E,V} \otimes \mathcal{O}_{C_j}$ is semistable for some $j$. Then by the proof of Lemma \ref{lemma restriction of kernel bundle is not semi-stable}(d), $V \cap H^0(E_j(-p_{j-1} -p_j)) =0$. But this will not imply $\text{Ker}(\rho_{j_{|_V}}) =0$ if $n > 2$.
     \item Suppose $M_{E,V} \otimes \mathcal{O}_{C_j}$ is semistable for some $j$ and not a trivial bundle on $C_j$. Also suppose $E_j$ is semistable for such a $j$. Then we claim that $\text{Ker}(\rho_{j_{|_V}}) = 0$. For, if $\text{Ker}(\rho_{j_{|_V}}) \neq 0$, from the commutative diagram in Lemma \ref{lemma restriction of kernel bundle is not semi-stable}(b), the semistability of $M_{E,V} \otimes \mathcal{O}_{C_j}$ will imply that $d_j = 0$. Then by Clifford's theorem, $k_j \leq h^0(E_j) \leq \frac{d_j}{2} + r$, which forces $k_j-r$ to be $0$. So $M_{E_j,V_j} =0$. This means that $M_{E,V} \otimes \mathcal{O}_{C_j} \cong \text{Ker}(\rho_{j_{|_V}}) \otimes \mathcal{O}_{C_j}$, a trivial bundle.
 \end{enumerate}

\end{remark}
 \begin{lemma}\label{lemma for k < d +r}
 Let $(E, V)$ be a generated pair on a chain-like curve $C$ with $E_j$ semistable for all $j$. Suppose there exists $l \in \{1,\dots,n\}$ such that $d_l > 0$. Then $k < d + r$.

\begin{proof}

To prove this, we consider several cases.\\
\begin{enumerate}
\item Suppose that $\mu(E_j) > 2g_j -2$ for each $j$. Then $h^1(E_j) = 0$, since each $E_j$ is semistable. By Lemma \ref{lemma about generated pair and ontoness}(b), we have
\begin{align*}
\begin{split}
k \leqslant h^0(E) & = \sum_{j=1}^nh^0 (E_j) - (n-1)r \\
                              & = d + r(n  - p_a(C)) - (n-1)r \\
                              & < d + r,
\end{split}
\end{align*}
as $p_a(C) \geq 2$.
\item Suppose that $\mu(E_j) \leqslant 2 g_j - 2$ for each $j$. Then
\begin{align*}
\begin{split}
k \leqslant h^0(E) & = \sum_{j=1}^nh^0(E_j) - (n-1)r \\
                              & \leq \frac{d}{2} + nr - (n-1)r~~(\text{by Theorem \ref{theorem directly from the paper}}),  \\
                              & < d + r,
\end{split}
\end{align*}
as $d_j \geq  0$ for each $j$, $d_l > 0$ for some $l$ and $d = \sum\limits_{j=1}^nd_j$.
\item Suppose $A$ is a non-empty proper subset of $\lbrace j \in \mathbb{N} \vert 1 \leqslant j \leqslant n \rbrace$ with $\mu (E_j) \leqslant 2 g_j - 2$. Then by Theorem \ref{theorem directly from the paper} and Riemann-Roch Theorem, we have
\begin{align*}
\begin{split}
k \leqslant h^0(E) & = \sum_{j \in A} h^0(E_j) + \sum_{ t \notin A} h^0(E_t) - (n-1)r \\
                              & \leqslant  \sum_{j \in A} ( d_j/2 + r )  + \sum_{ t \notin A} (d_t + r(1 -g_t))  - (n-1)r  \\
                              & = \sum_{j \in A}d_j/2 + \sum_{ t \notin A} d_t - \sum_{ t \notin A} r g_t + r \\
                              & < \sum_{j \in A}d_j + \sum_{ t \notin A} d_t + r = d + r,
\end{split}
\end{align*}
as $g_t \geqslant 2$. 
\end{enumerate}
In case (3), as $A$ is an arbitrary non-empty proper subset of  $\lbrace j \in \mathbb{N} \vert 1 \leqslant j \leqslant n \rbrace$ such that $\mu (E_j) \leqslant 2 g_j - 2$, it covers all the remaining cases.
Therefore, $k < d + r$.
\end{proof}
\end{lemma}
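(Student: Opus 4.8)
The plan is to bound $h^0(E)$ from above and then use that $V \subseteq H^0(E)$ forces $k \leq h^0(E)$; it therefore suffices to establish the strict inequality $h^0(E) < d+r$ under the hypothesis that some $d_l > 0$. The natural starting point is Lemma \ref{lemma about generated pair and ontoness}(b), which gives $h^0(E) = \sum_{j=1}^n h^0(E_j) - (n-1)r$ and thus reduces the whole problem to component-wise estimates on the smooth curves $C_j$. By part (a) of the same lemma each $(E_j,V_j)$ is itself a generated pair, so every $E_j$ is globally generated; consequently $d_j \geq 0$ and $\mu(E_j) \geq 0$ for all $j$, and this nonnegativity is precisely what will allow me to apply Theorem \ref{theorem directly from the paper}.

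I would then split the index set $\{1,\dots,n\}$ according to the slope of $E_j$. For the indices with $\mu(E_j) > 2g_j - 2$, semistability together with Serre duality forces $h^1(E_j) = 0$: indeed $K_{C_j} \otimes E_j^{\vee}$ is semistable of negative slope $(2g_j-2) - \mu(E_j)$ and hence has no sections, so $h^1(E_j) = h^0(K_{C_j}\otimes E_j^{\vee}) = 0$, and Riemann--Roch yields the exact value $h^0(E_j) = d_j + r(1-g_j)$. For the indices with $\mu(E_j) \leq 2g_j - 2$, the bound $0 \leq \mu(E_j) \leq 2g_j - 2$ holds (using $\mu(E_j)\geq 0$ from the previous step), so Theorem \ref{theorem directly from the paper} applies and gives $h^0(E_j) \leq d_j/2 + r$. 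Substituting both estimates into the decomposition of $h^0(E)$ produces, after cancelling the $r$'s via the $-(n-1)r$ term, an upper bound of the form $h^0(E) \leq \tfrac12\sum_{\mu\text{ small}} d_j + \sum_{\mu\text{ large}} d_t - r\sum_{\mu\text{ large}} g_t + r$.

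Finally I would compare this bound with $d+r = \sum_j d_j + r$. A direct subtraction shows that the excess $(d+r)$ minus the bound equals $\tfrac12\sum_{\mu\text{ small}} d_j + r\sum_{\mu\text{ large}} g_t$, a sum of nonnegative terms; so $h^0(E)\leq d+r$ is automatic, and the entire content lies in upgrading this to a \emph{strict} inequality. This is where the case analysis becomes essential, and I expect it to be the only genuine obstacle. If at least one component has $\mu(E_j) > 2g_j - 2$, then the term $r\sum_{\mu\text{ large}} g_t$ is strictly positive because $g_t \geq 2$, and strictness is immediate. If instead every component has small slope, this second sum is empty, and strictness must be extracted from $\tfrac12\sum_{\mu\text{ small}} d_j > 0$ — which is exactly the case that the hypothesis $d_l > 0$ for some $l$ is designed to cover. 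The careful bookkeeping of which source of strictness is available in each regime, and the observation that $d_l>0$ rescues precisely the all-small-slope case, is the crux; the analytic inputs (Riemann--Roch, the vanishing $h^1(E_j)=0$, and Theorem \ref{theorem directly from the paper}) are all standard.
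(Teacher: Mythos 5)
Your proposal is correct and follows essentially the same route as the paper: reduce to $h^0(E)=\sum_j h^0(E_j)-(n-1)r$ via Lemma \ref{lemma about generated pair and ontoness}(b), split the components according to whether $\mu(E_j)>2g_j-2$ (where $h^1(E_j)=0$ and Riemann--Roch apply) or $\mu(E_j)\leq 2g_j-2$ (where Theorem \ref{theorem directly from the paper} applies, using $\mu(E_j)\geq 0$ from global generation), and observe that strictness comes from $r\sum g_t>0$ when the large-slope set is nonempty and from $d_l>0$ when it is empty. The only difference is cosmetic: you fold the paper's three cases into a single computation by identifying the excess $(d+r)-h^0(E)\geq \tfrac12\sum_{\text{small}}d_j+r\sum_{\text{large}}g_t$, which is a slightly cleaner bookkeeping of the same argument.
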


\begin{remark} \label{This_remark_implies_d_l>0}
 Suppose there exists $l \in \{1,\dots,n\}$ such that $V \cap H^0(E_l(-p_{l-1} - p_l)) \neq 0$ and $E_l$ is semistable. Then by Lemma \ref{lemma restriction of kernel bundle is not semi-stable}(c), $d_l > 0$. 
\end{remark}

Suppose $E$ is a rank $r$ vector bundle and $L$ is a line bundle on $C$. Then we have
\begin{align*}
\begin{split}
\chi(E \otimes L) & = \sum_{j=1}^n \chi(E_j \otimes L) - r (n-1).\\
\end{split}
\end{align*} 
By projection formula, $\chi(E_j \otimes L) = \chi(E_j \otimes L_j) = \chi(E_j)+r\text{deg}(L_j)$, where $L_j := L_{|_{C_j}}$. So 
\begin{equation}\label{chi(E tensor L)}
 \chi(E \otimes L) = \chi(E) + r\text{deg}(L),   
\end{equation}
where by $\deg(L)$ we mean $\sum\limits_{j=1}^n \deg(L_j)$. In particular, we have 
\begin{align}
\chi(M_{E, V}  \otimes L) = (k- r)(1 +  \deg(L) - p_a(C)) -d.
\end{align}

\begin{theorem}\label{theorem for strongly unstable for any L}
Let $(E, V)$ be a generated pair on a chain-like curve $C$ with $ \frac{d}{k-r} > (n-1)$. Suppose that $\text{Ker}({\rho_j}_{|_V}) \neq 0$ for each $j$. Then $M_{E, V} \otimes L$ is strongly unstable, for any line bundle $L$ on $C$. In particular, $M_{E,V}$ is strongly unstable.

\begin{proof}
Suppose that $M_{E, V} \otimes L$ is $w$-semistable with respect to some polarisation $w = (w_1,\dots,w_n)$. Since $\text{Ker}({\rho_j}_{|_V}) \neq 0$ for each $j$, and $M_{E,V} \otimes L$ is $w$-semistable, proceeding as in Theorem \ref{a theorem on strongly unstable kernel bundles}, we obtain 
\begin{eqnarray}\label{left inequality for wj for M(E,V) tensor L}
  (k-r)(g_1) & \geq & w_1[(k-r)(p_a(C)-1-\text{deg}(L))+d] + (k-r)\text{deg}(L_1)     \nonumber \\
  (k-r)(g_n) & \geq & w_n[(k-r)(p_a(C)-1-\text{deg}(L))+d] + (k-r)\text{deg}(L_n) \nonumber \\
  (k-r)(g_j) & \geq & w_j[(k-r)(p_a(C)-1-\text{deg}(L))+d] + (k-r)(\text{deg}(L_j)-1) \nonumber \\ 
  & & \text{for}~j \in \{2,\dots n-1\}. \nonumber
 \end{eqnarray}
 Adding the above inequalities, we get
 \begin{eqnarray}
   (k-r)p_a(C) & \geq & [(k-r)(p_a(C)-1-\text{deg}(L) + \text{deg}(L)- (n-2)] + d, \nonumber 
 \end{eqnarray}
 which implies $0 \geq -(n-1)(k-r) + d$. This contradicts the fact that $\frac{d}{(k-r)} > (n-1)$.

\end{proof}
\end{theorem}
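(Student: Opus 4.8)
The plan is to argue by contradiction, exactly as in the proof of Theorem~\ref{a theorem on strongly unstable kernel bundles}, but exploiting \emph{all} components simultaneously rather than extracting a contradiction from a single one. Suppose $M_{E,V} \otimes L$ is $w$-semistable for some polarization $w = (w_1,\dots,w_n)$. The hypothesis $\text{Ker}(\rho_{j_{|_V}}) \neq 0$ for \emph{every} $j$ is what forces each component to contribute a slope inequality, and the whole argument hinges on summing these $n$ inequalities and invoking the normalization $\sum_j w_j = 1$.

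First I would produce, for each $j$, a nonzero subsheaf of $M_{E,V}\otimes L$ supported on $C_j$. Tensoring the inclusion $\text{Ker}(\rho_{j_{|_V}}) \otimes \mathcal{O}_{C_j}(-p_{j-1}-p_j) \hookrightarrow M_{E,V}$, read off from the commutative diagram in Lemma~\ref{lemma restriction of kernel bundle is not semi-stable}(b) together with the inclusions (\ref{Inclusion involving I_C_j}), by $L$ gives $\text{Ker}(\rho_{j_{|_V}}) \otimes L_j(-p_{j-1}-p_j) \hookrightarrow M_{E,V}\otimes L$, a subsheaf that is nonzero precisely because $\text{Ker}(\rho_{j_{|_V}}) \neq 0$. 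Writing $t_j := \dim \text{Ker}(\rho_{j_{|_V}})$, this subsheaf has multirank concentrated on $C_j$, so its $w$-slope is $\chi(L_j(-p_{j-1}-p_j))/w_j$, which by Riemann--Roch equals $(\deg(L_j) - g_j)/w_j$ for $j \in \{1,n\}$ and $(\deg(L_j) - 1 - g_j)/w_j$ for the interior indices $j \in \{2,\dots,n-1\}$. Meanwhile, by (\ref{chi(E tensor L)}), $\mu_w(M_{E,V}\otimes L) = [(k-r)(1+\deg(L)-p_a(C)) - d]/(k-r)$. Applying $w$-semistability, $\mu_w(\text{subsheaf}_j) \leq \mu_w(M_{E,V}\otimes L)$, then clearing denominators by $w_j(k-r) > 0$ and rearranging to isolate $(k-r)g_j$ on the left, reproduces the three displayed inequalities (the two endpoint cases and the interior case).

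The decisive step is to add all $n$ of these inequalities. The left-hand sides sum to $(k-r)\sum_j g_j = (k-r)\,p_a(C)$. On the right, the normalization $\sum_j w_j = 1$ collapses every $w_j$-weighted term into the single quantity $(k-r)(p_a(C)-1-\deg(L)) + d$, while the remaining terms sum, via $\sum_j \deg(L_j) = \deg(L)$ and the observation that exactly $n-2$ of the components carry the extra $-1$, to $(k-r)\deg(L) - (k-r)(n-2)$. After the $\deg(L)$-terms cancel, one is left with $(k-r)\,p_a(C) \geq (k-r)(p_a(C) - n + 1) + d$, i.e. $(n-1)(k-r) \geq d$, which contradicts the standing hypothesis $d/(k-r) > n-1$. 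The ``in particular'' clause is immediate by specializing to $L = \mathcal{O}_C$.

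I expect the genuine conceptual point — and the place where this proof departs from Theorem~\ref{a theorem on strongly unstable kernel bundles} — to be the summation itself: no single component need violate semistability here, so there is no contradiction to be found by pitting a lower bound on some $w_j$ against an upper bound (as was done earlier using Theorem~\ref{theorem of Bigas}). The obstruction is global and only surfaces once all $n$ slope inequalities are combined and $\sum_j w_j = 1$ eliminates the dependence on the polarization. The main technical hazard is bookkeeping: the asymmetry between the two end components (one node apiece, hence one fewer twist) and the $n-2$ interior components (two nodes) must be tracked carefully through the Euler-characteristic computations so that the final telescoping produces exactly the coefficient $n-1$.
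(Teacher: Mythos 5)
Your proposal is correct and follows essentially the same route as the paper: for each component you take the subsheaf $\mathrm{Ker}(\rho_{j_{|_V}})\otimes L_j(-p_{j-1}-p_j)$ of $M_{E,V}\otimes L$, write down the resulting slope inequality, sum over all $n$ components using $\sum_j w_j = 1$, and obtain $(n-1)(k-r)\geq d$, contradicting the hypothesis. Your bookkeeping of the endpoint versus interior components and the final telescoping match the paper's computation exactly, and your closing observation that the contradiction here is global (via summation) rather than componentwise is an accurate description of how this argument differs from that of Theorem \ref{a theorem on strongly unstable kernel bundles}.
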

\begin{remark}
 On any nodal curve which is reduced and reducible, if $\mathcal{F}$ is a vector bundle which is semistable with respect to a polarization $w$ then it is not true in general that $\mathcal{F} \otimes L$ is $w$-semistable, for any line bundle $L$ on $C$. It is also not true in general that if $\mathcal{F}$ is strongly unstable, then $\mathcal{F} \otimes L$ is strongly unstable. For example, let $C$ be a two component curve with a single node and let $\mathcal{F} = \mathcal{O}_C$. By Lemma \ref{lemma on existence of polarization} and Theorem \ref{general result on stability}, there exists a polarization $w$ such that $\mathcal{O}_C$ is $w$-semistable. Now suppose $L$ is a line bundle on $C$ which is obtained by gluing line bundles $L_1$ on $C_1$ and $L_2$ on $C_2$ such that $\text{deg}(L_1) =0$ and $\text{deg}(L_2) = (g_1+g_2)$, where $g_1 \geq 2$ is the genus of $C_1$ and $g_2 \geq 2$ is the genus of $C_2$. Then $\chi(L) =1~\text{and}~ \chi_1:=\chi(L_1) <0$. We claim that $L$ is strongly unstable. For, if $L$ is $w$-semistable, then $\chi_i$'s and $w_i$'s have to satisfy the inequalities (\ref{polarization inequalities}). In particular, $w_1$ should be such that $\frac{\chi_1-1}{\chi} \leq w_1 \leq \frac{\chi_1}{\chi}$. But $\frac{\chi_1}{\chi} = \chi_1 < 0$. This implies $w_1 <0$, a contradiction. So even though $\mathcal{O}_C$ is $w$-semistable with respect to a polarization $w$, $\mathcal{O}_C \otimes L = L$ is strongly unstable. Similarly, if we tensor this particular $L$ with its dual $L^{\ast}$, then we have $L$ as strongly unstable but $L \otimes L^{\ast} = \mathcal{O}_C$ as $w$-semistable.\\ 
 
 \noindent Therefore, Theorem \ref{theorem for strongly unstable for any L} becomes significant as it is saying that under suitable conditions, not only $M_{E,V}$, but also $M_{E,V} \otimes L$ is strongly unstable for any line bundle $L$ on a chain-like curve $C$.
\end{remark}
\begin{corollary} \label{strongly unstable for 2-component curve} 
 Suppose $C$ is a reducible nodal curve with two smooth components intersecting at a node $p$. Let $(E,V)$ be a generated pair on $C$. Also let $\text{Ker}({\rho_j}_{|_V}) \neq 0$ and $E_j$ be semistable for each $j$. Then $M_{E, V} \otimes L$ is strongly unstable, for any line bundle $L$ on $C$.
\end{corollary}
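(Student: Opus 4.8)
The plan is to obtain this as a direct consequence of Theorem \ref{theorem for strongly unstable for any L}, whose only hypothesis beyond $\text{Ker}(\rho_{j_{|_V}}) \neq 0$ for each $j$ is the numerical condition $\frac{d}{k-r} > n-1$. Since $C$ has exactly two components we have $n = 2$, so this condition reads $\frac{d}{k-r} > 1$, equivalently $k < d + r$. Thus the entire problem reduces to verifying the single inequality $k < d+r$ from the extra hypothesis that $E_1$ and $E_2$ are semistable.

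First I would translate the nonvanishing of the two kernels into the nonvanishing of $V \cap H^0(E_l(-p))$. For two components the inclusion (\ref{Inclusion involving I_C_j}) degenerates to an isomorphism $\mathcal{O}_{C_{3-j}}(-p) \cong \mathcal{I}_{C_j}$, whence $E \otimes \mathcal{I}_{C_j} \cong E_{3-j}(-p)$ and $\text{Ker}(\rho_{j_{|_V}}) = V \cap H^0(E_{3-j}(-p))$ for $j = 1,2$. Consequently the hypothesis $\text{Ker}(\rho_{j_{|_V}}) \neq 0$ for each $j$ gives $V \cap H^0(E_l(-p_{l-1}-p_l)) \neq 0$ for some (in fact every) component $C_l$. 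Since each $E_l$ is semistable, Remark \ref{This_remark_implies_d_l>0} (which rests on Lemma \ref{lemma restriction of kernel bundle is not semi-stable}(c)) then yields $d_l > 0$.

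With an index $l$ satisfying $d_l > 0$ in hand and all the $E_j$ semistable, I would invoke Lemma \ref{lemma for k < d +r} to conclude $k < d + r$, i.e. $\frac{d}{k-r} > 1 = n-1$. All hypotheses of Theorem \ref{theorem for strongly unstable for any L} are now satisfied, and applying it with $n=2$ gives that $M_{E,V} \otimes L$ is strongly unstable for every line bundle $L$ on $C$, as claimed.

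There is no serious obstacle: the substance of the corollary is the observation that semistability of $E_1$ and $E_2$, together with the nonvanishing kernels, forces the degree bound $k < d+r$ via Lemma \ref{lemma for k < d +r}, thereby supplying exactly the numerical hypothesis that Theorem \ref{theorem for strongly unstable for any L} requires. The only step deserving a moment's care is the identification $\text{Ker}(\rho_{j_{|_V}}) = V \cap H^0(E_{3-j}(-p))$ in the two-component case, which is what guarantees $d_l > 0$.
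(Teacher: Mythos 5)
Your proposal is correct and follows essentially the same route as the paper: identify $\text{Ker}(\rho_{j_{|_V}})$ with $V \cap H^0(E_{3-j}(-p))$ via the two-component form of the sequence defining $\mathcal{I}_{C_j}$, deduce $d_l > 0$ from Remark \ref{This_remark_implies_d_l>0}, obtain $k < d+r$ from Lemma \ref{lemma for k < d +r}, and conclude by Theorem \ref{theorem for strongly unstable for any L} with $n=2$. No gaps.
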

\begin{proof}
 Since $C$ has only two components, the exact sequence (\ref{Exact sequence defining I_C_j}) becomes
 \begin{equation*}
     0 \rightarrow \mathcal{O}_{C_i}(-p) \rightarrow \mathcal{O}_C \rightarrow \mathcal{O}_{C_j} \rightarrow 0,
 \end{equation*}
 where $i \neq j$. Tensoring this exact sequence with $E$ and taking the corresponding long exact sequence of cohomology groups, we conclude that $\text{Ker}(\rho_j) = H^0(E_i(-p))$, where $i \neq j$. So $\text{Ker}({\rho_j}_{|_V}) \neq 0$ is equivalent to saying $V \cap H^0(E_i(-p)) \neq 0$. Therefore, by Remark \ref{This_remark_implies_d_l>0} and Lemma \ref{lemma for k < d +r}, we have $d > (k-r)$. The result now follows from Theorem \ref{theorem for strongly unstable for any L}.
\end{proof}
\begin{remark}
 Suppose $L = \mathcal{O}_C$. Then Corollary \ref{strongly unstable for 2-component curve} is nothing but \cite[Theorem 2.4]{Brivio-Favale2020}.
\end{remark}
\begin{corollary}
 Let $C$ be a chain-like curve and $(E, V)$ be a generated pair on $C$. Suppose $ p_a(C) > \frac{(n-2)(k - r)}{r}$, $H^1(E_j) = 0$ and $\text{Ker}({\rho_j}_{|_V}) \neq 0$ for  each $j$. Then $M_{E, V} \otimes L$ is strongly unstable, for any line bundle $L$ on $C$.
 \begin{proof}
 Similar to the proof of the first case of Lemma \ref{lemma for k < d +r}, we have 
  \begin{eqnarray}
    k & \leq & d + r - rp_a(C) \nonumber \\
      & < & d + r - (n-2)(k-r), \nonumber
  \end{eqnarray}
  which implies $(n-1)(k-r)  <  d$.
  The result now follows from Theorem \ref{theorem for strongly unstable for any L}.
 \end{proof}
\end{corollary}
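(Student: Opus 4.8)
The plan is to deduce this corollary from Theorem \ref{theorem for strongly unstable for any L}, whose hypotheses are that $\text{Ker}(\rho_{j_{|_V}}) \neq 0$ for each $j$ and that $\frac{d}{k-r} > n-1$. The first of these is assumed outright in the statement, so all the real work goes into extracting the numerical inequality $d > (n-1)(k-r)$ from the two remaining hypotheses: the vanishing $H^1(E_j) = 0$ for all $j$ and the genus bound $p_a(C) > \frac{(n-2)(k-r)}{r}$. This mirrors the first case in the proof of Lemma \ref{lemma for k < d +r}, where the same vanishing was used; here, however, I want the sharper conclusion $d > (n-1)(k-r)$ rather than merely $k < d+r$.

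First I would use $H^1(E_j) = 0$ together with Riemann--Roch on each smooth component $C_j$ to compute $h^0(E_j) = \chi(E_j) = d_j + r(1-g_j)$ exactly (not just as an inequality). Feeding these into Lemma \ref{lemma about generated pair and ontoness}(b) gives $k \le h^0(E) = \sum_{j=1}^n h^0(E_j) - (n-1)r$, and summing the $n$ expressions while using $p_a(C) = \sum_j g_j$ collapses the right-hand side to $d + r - r\,p_a(C)$. Thus $k \le d + r - r\,p_a(C)$.

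Next I would invoke the genus hypothesis in the equivalent form $r\,p_a(C) > (n-2)(k-r)$, substitute it into the bound just obtained to get $k < d + r - (n-2)(k-r)$, and then rearrange by moving all the $(k-r)$ terms to one side: this yields $(k-r) + (n-2)(k-r) < d$, i.e. $(n-1)(k-r) < d$. Since $k > r$ (the kernel bundle $M_{E,V}$ has positive rank $k-r$), dividing by $k-r > 0$ gives $\frac{d}{k-r} > n-1$, which is precisely the remaining hypothesis of Theorem \ref{theorem for strongly unstable for any L}. Applying that theorem then finishes the argument, and the final ``in particular'' follows by taking $L = \mathcal{O}_C$.

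The computation is entirely elementary, so I do not expect any genuine obstacle; the only point requiring care is bookkeeping --- ensuring that substituting $r\,p_a(C) > (n-2)(k-r)$ into $k \le d + r - r\,p_a(C)$ produces a \emph{strict} inequality and that the terms collect correctly to $(n-1)(k-r)$. The conceptual content is simply the observation that the $H^1$-vanishing upgrades case~(1) of Lemma \ref{lemma for k < d +r} to the exact bound $k \le d + r - r\,p_a(C)$, which is exactly strong enough to cross the threshold $\frac{d}{k-r} > n-1$ demanded by Theorem \ref{theorem for strongly unstable for any L}.
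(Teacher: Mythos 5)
Your proposal is correct and follows essentially the same route as the paper: both use $H^1(E_j)=0$ with Riemann--Roch and Lemma \ref{lemma about generated pair and ontoness}(b) to obtain $k \le d + r - r\,p_a(C)$, then substitute the genus hypothesis to reach $(n-1)(k-r) < d$ and invoke Theorem \ref{theorem for strongly unstable for any L}. The arithmetic checks out, so there is nothing to add.
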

\begin{remark}
In particular, if $E_j$ is semistable, $\mu(E_j) > (2g_j-2)$, $g_j \geq \frac{(k-r)}{r}$ and $\text{Ker}({\rho_j}_{|_V}) \neq 0$ for each $j$, then $M_{E,V} \otimes L$ is strongly unstable for any line bundle $L$ on $C$.
\end{remark}

\begin{example}
 Let $C$ be a reducible nodal curve having two smooth components $C_1$ and $C_2$ intersecting at a node $p$. Let $\mathcal{L}_1$ and $\mathcal{L}_2$ be line bundles on $C_1$ and $C_2$ respectively of sufficiently large degrees such that each $\mathcal{L}_i$ is globally generated and $H^0(\mathcal{L}_i(-p)) \neq 0$ for each $i$. Let $\mathcal{L}$ be the line bundle on $C$ obtained by gluing $\mathcal{L}_1$ and $\mathcal{L}_2$ via a linear isomorphism at the node $p$. Then $\mathcal{L}$ will be globally generated. So $(\mathcal{L}, H^0(\mathcal{L}))$ is a generated pair on $C$. Since $\text{Ker}(\rho_j) = H^0(\mathcal{L}_i(-p))$, where $i \neq j$ (see proof of Corollary \ref{strongly unstable for 2-component curve} for details), we can conclude from our choice of $\mathcal{L}_i$ that 
 $\text{Ker}(\rho_j) \neq 0$ for each $j$. So by Corollary \ref{strongly unstable for 2-component curve}, $M_{\mathcal{L}} \otimes L$ is strongly unstable for any line bundle $L$ on C, where $M_{\mathcal{L}}$ is the kernel bundle corresponding to the generated pair $(\mathcal{L},H^0(\mathcal{L}))$.
\end{example}

\section*{Acknowledgments}
We would like to express our gratitude to the anonymous referee for valuable comments and helpful remarks.
We thank Arijit Dey for suggesting this problem. We would like to thank  Arijit Dey, D. S. Nagaraj \& J. N. Iyer for many helpful discussions and comments. The first named author would like to thank National Board for Higher Mathematics (NBHM), Department of Atomic Energy, Government of India, for financial support through Postdoctoral Fellowship. The second named author would like to thank Council of Scientific \& Industrial Research (CSIR), India, for providing the financial assistance through CSIR - JRF,SRF scheme at the Department of Mathematics, Indian Institute of Technology, Madras. He would also like to thank Indian Institute of Technology, Madras and St. Joseph's College, Autonomous, Bangalore, for providing the necessary working environment.

\end{document}